\newcounter{mercuriale}
\def\acts{\curvearrowright}
\DeclarePairedDelimiter\abs{\lvert}{\rvert}%
\DeclarePairedDelimiter\norm{\lVert}{\rVert}%
\let\oldabs\abs
\def\abs{\@ifstar{\oldabs}{\oldabs*}}
\let\oldnorm\norm
\def\norm{\@ifstar{\oldnorm}{\oldnorm*}}
\theoremstyle{definition}
\newtheorem{thm}{Theorem}[section]
\newtheorem{lem}{Lemma}[section]
\newtheorem{prop}{Proposition}[section]
\newtheorem{cor}{Corollary}[section]
\newtheorem{defn}{Definition}[section]
\newtheorem{example}{Example}[section]
\newtheorem{remark}{Remark}[section]
\DeclareMathOperator{\cost}{cost}
\DeclareMathOperator{\PSL}{PSL}
\DeclareMathOperator{\SL}{SL}
\DeclareMathOperator{\Isom}{Isom}
\DeclareMathOperator{\stab}{stab}
\let\phi\varphi
\let\empt\varnothing
\newcommand{\EE}{\mathbb{E}}      
\newcommand{\RR}{\mathbb{R}}      
\newcommand{\e}{\varepsilon} 
\newcommand{\ZZ}{\mathbb{Z}}      
\newcommand{\QQ}{\mathbb{Q}}      
\newcommand{\PP}{\mathbb{P}}      
\newcommand{\MM}{\mathbb{M}}      
\newcommand{\NN}{\mathbb{N}}      
\newcommand{\HH}{\mathbb{H}}      
\newcommand{\1}{\mathbbm{1}}
\newcommand\restr[2]{{
  \left.\kern-\nulldelimiterspace 
  #1 
  \vphantom{\big|} 
  \right|_{#2} 
  }}
\newcommand{\Rel}{\mathcal{R}} 
\newcommand{\MMo}{{\mathbb{M}_0}}
\DeclareMathOperator{\intensity}{int}
\title{Gaboriau's criterion and fixed price one for locally compact groups}
\author{Sam Mellick\footnote{Email: \texttt{samuel.mellick@mcgill.ca}}}
\affil{Department of Mathematics and Statistics, McGill University}
\date{\today}
\begin{document}

\maketitle

\begin{abstract}
Let $G_1$ be a semisimple real Lie group and $G_2$ another locally compact second countable unimodular group. We prove that $G_1 \times G_2$ has fixed price one if $G_1$ has higher rank, or if $G_1$ has rank one and $G_2$ is a $p$-adic split reductive group of rank at least one. As an application we resolve a question of Gaboriau showing $\SL(2,\QQ)$ has fixed price one. Inspired by the very recent work \cite{FMW}, we employ the method developed by the author and Miklós Abért to show that all essentially free probability measure preserving actions of groups weakly factor onto the Cox process driven by their amenable subgroups. We then show that if an amenable subgroup can be found satisfying a double recurrence property then the Cox process driven by it has cost one.
\end{abstract}

\section{Introduction}

\emph{Cost} is a fundamental invariant in measured group theory, first introduced by Levitt\cite{Levitt} and then greatly expanded upon by Gaboriau\cite{GaboriauMerc}\cite{GaboriauCout}. It is a numerical invariant associated to probability measure preserving (pmp) actions of a countable group $\Gamma$ on a standard Borel probability space, and can be viewed as the ergodic theoretic generalisation of the notion of \emph{rank} $d(\Gamma)$, that is, the minimum size of a generating set. A group is said to have fixed price if all of its essentially free pmp actions have the same cost. Aside from its intrinsic interest, cost has numerous applications, for example classifying free groups up to orbit equivalence\cite{GaboriauCout}. Further applications are found in ergodic theory, asymptotic group theory, $3$-manifold topology, percolation theory, the study of paradoxical decompositions, and operator algebra.\cite{GaboriauSeward}\cite{GaboriauLyons}\cite{AbertNikolov}\cite{RussPF}\cite{GaboriauPerc}\cite{EpsteinMonod}\cite{Tarski}\cite{OA1}\cite{OA2}.

Suppose $\Lambda < \Gamma$ is a subgroup of a countable group $\Gamma$. A special case of an argument of Gaboriau (see Critères VI.24 of \cite{GaboriauCout}) states that if $\Lambda$ is \emph{weakly normal} in the sense that $\Lambda \cap \Lambda^\gamma$ is infinite for all $\gamma \in \Gamma$, then
\[
	\cost(\Gamma \acts (X, \mu)) \leq \cost(\Lambda \acts (X, \mu))
\]
for all essentially free pmp actions $\Gamma \acts (X, \mu)$. In particular, if $\Gamma$ contains a weakly normal subgroup of fixed price one, then $\Gamma$ itself has fixed price one. This was used by Gaboriau in the same paper to show that nonuniform lattices in higher rank real semisimple Lie groups have fixed price one. That fact through an orbit equivalence argument also implies that all lattices in such groups admit \emph{some} action of cost one, raising the question of whether or not they too have fixed price one. This was very recently resolved in \cite{FMW}, which serves as the inspiration for the present work.

This paper is fundamentally about an adaptation of Gaboriau's criterion to the realm of locally compact second countable unimodular (lcscu) groups, where substantial new difficulties appear. First, recall the following definition:

\begin{defn}
Suppose $G \acts (X, m)$ is a measure preserving action of a standard Borel space $X$. We say that the action is \emph{conservative} if every positive measure subset $A \subseteq X$ satisfies the conclusion of the Poincaré recurrence theorem. That is, almost every $a \in A$ \emph{recurs}: the set	$\{g \in G \mid ga \in A \}$ is unbounded.
\end{defn}

Observe that for countable groups $\Lambda < \Gamma$, the weak normality condition appearing in Gaboriau's criterion is exactly equivalent with saying that $\Gamma \acts \Gamma/\Lambda \times \Gamma/\Lambda$ is conservative.

We show:

\begin{thm}\label{costtheorem}
	Suppose $A < G$ is an amenable, closed, unimodular, noncompact subgroup such that the action $G \acts (G/A)^2$ is a conservative action. Then $G$ has fixed price one.
\end{thm}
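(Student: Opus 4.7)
The plan is to introduce a canonical pmp $G$-action $\Pi_A$, the \emph{Cox process driven by $A$}, and deduce fixed price one by proving two things separately: (i) $\Pi_A$ has cost one, and (ii) every essentially free pmp action of $G$ weakly factors onto $\Pi_A$ in a manner that transfers cost in the desired direction. To build $\Pi_A$, note that unimodularity of both $G$ and $A$ produces a nonzero $G$-invariant Radon measure $\mu_{G/A}$ on $G/A$; one then lets $\Pi_{G/A}$ be a Poisson point process on $(G/A, \mu_{G/A})$ and defines $\Pi_A$ equivariantly by replacing each point $gA$ of $\Pi_{G/A}$ with Haar measure on the coset $gA \subseteq G$.

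For the cost one claim I would cross-section $\Pi_A$ to reduce to a countable pmp equivalence relation whose structure splits into two regimes. Within a single coset, the relation is hyperfinite by Connes--Feldman--Weiss, using amenability, noncompactness, and unimodularity of $A$, hence of cost one by Ornstein--Weiss. Between distinct cosets, conservativity of $G \acts (G/A)^2$ guarantees, for $\mu_{G/A}^{\otimes 2}$-almost every pair $(g_1 A, g_2 A)$, an unbounded set of $G$-translates bringing the two cosets arbitrarily close; combined with a mass-transport reweighting argument standard in the Poisson framework, this wires together distinct cosets at vanishing extra cost, yielding graphings of cost $1+\e$ for every $\e > 0$. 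For step (ii) I would invoke the Ab\'ert--Mellick mechanism advertised in the abstract: any essentially free pmp action $G \acts (X,\mu)$ admits an equivariant randomisation along $A$-orbits whose laws converge to that of $\Pi_A$, using only amenability of $A$ and not conservativity. Combining (i) and (ii) in the direction where weak factoring is monotone for cost forces $\cost(G \acts X) \leq 1$; the standard lower bound $\cost \geq 1$ for essentially free pmp actions of a noncompact lcscu group then yields equality, so $G$ has fixed price one.

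I expect the cost computation in step (i) to be the main obstacle. Amenability of $A$ takes care of within-coset cost for free, and the conservativity hypothesis is manifestly the continuous analogue of Gaboriau's weak normality, so the countable template should morally apply. The genuinely new work is turning double recurrence into a quantitative, $G$-equivariant, cheap wiring between Poisson-distributed cosets that respects both the Haar structure on each coset and the Poisson law on $G/A$; step (ii) should by contrast be a clean black-box invocation of the existing Ab\'ert--Mellick technology.
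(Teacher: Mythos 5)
Your proposal follows essentially the same route as the paper: construct the Cox process driven by $G/A$, show every essentially free point process weakly factors onto it via F\o lner propagation along $A$ (using only amenability), prove the Cox process has cost one by combining leafwise hyperfinite graphings of average degree two (amenability of $A$ plus Ornstein--Weiss) with arbitrarily sparse inter-coset edges supplied by conservativity of $G \acts (G/A)^2$, and conclude by cost monotonicity under weak factoring. The only cosmetic differences are that the paper phrases the within-coset step via the restricted rerooting (Palm) equivalence relation rather than cross-sections, and realises the cheap inter-coset wiring concretely as $t$-cutoff $R$-star factor graphs together with a Borel--Cantelli argument rather than a mass-transport reweighting.
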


We remark that the above theorem is only of interest if $G/A$ has infinite volume, as otherwise $G$ itself is amenable.

Note that the amenability condition above is required to prove \emph{fixed price} one. Without it, at present we can only conclude that \emph{some} action of $G$ has cost one.

As an application of Theorem \ref{costtheorem} we have:

\begin{thm}\label{higherrankfp}
	Groups of the form $G_1 \times G_2$ with $G_1$ and $G_2$ noncompact lcscu have fixed price one, if:
    \begin{itemize}
        \item $G_1$ is a higher rank real semisimple Lie group, or
        \item $G_1$ is a rank one real simple Lie group and $G_2$ is a product of a $p$-adic split reductive group of rank at least one and an arbitrary (possibly trivial) lcscu group.
    \end{itemize}
\end{thm}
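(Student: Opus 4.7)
The plan is to apply Theorem~\ref{costtheorem}: in each case, I want to exhibit an amenable, closed, unimodular, noncompact subgroup $A < G_1 \times G_2$ such that $G \acts (G/A)^2$ is conservative. A standard reduction (using that $A$ is the stabilizer of the identity coset $eA \in G/A$) shows that conservativity of $G \acts (G/A)^2$ is equivalent to conservativity of $A \acts G/A$; this is the ``double recurrence'' property advertised in the abstract. So the task reduces to finding, in each case, an amenable $A$ whose left-translation action on $G/A$ is recurrent on sets of positive Haar measure.

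For the higher rank case, I would take $A$ to be the unipotent radical $N_1$ of a minimal parabolic of $G_1$, embedded in $G = G_1 \times G_2$ via the first factor. Then $A$ is nilpotent hence amenable, closed, unimodular, and noncompact; and $G/A \cong (G_1/N_1) \times G_2$ on which $A$ acts only on the first factor. Conservativity of $N_1 \acts G_1/N_1$ I would verify by a Borel--Cantelli style summation, using the Iwasawa decomposition $G_1 = N_1 \cdot A_1 \cdot K$ with $A_1$ the full $\RR$-split torus. The return measure of an $N_1$-orbit to a ball in $G_1/N_1$ is controlled by matrix coefficients of the quasi-regular representation $L^2(G_1/N_1)$, and Howe--Moore decay combined with the higher rank hypothesis $\dim A_1 \geq 2$ provides enough ``directions of return'' to make the Borel--Cantelli sum diverge.

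For the rank one plus $p$-adic case, I would take $A = N_1 \times N_2$, where $N_1 < G_1$ and $N_2 < G_2$ are the unipotent radicals of minimal parabolics in the Archimedean factor $G_1$ and in the $p$-adic split reductive factor of $G_2$ (and $A$ is trivial on any additional lcscu factor of $G_2$). Then $A$ is abelian, amenable, closed, unimodular, and noncompact, and its normalizer contains the rank two abelian Cartan $A_1 \times A_2$ obtained by combining the real and $p$-adic Cartans. Conservativity of $A \acts G/A$ is proved by the same Borel--Cantelli mechanism, now summing matrix coefficients with respect to a rank two abelian family of elements mixing the Archimedean and non-Archimedean directions.

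The main obstacle is proving conservativity of $A \acts G/A$ in a quantitative way. The essential point is the combined rank $\geq 2$: rank one horospherical translation on an infinite-volume homogeneous space (such as $N \acts \SL(2,\RR)/N \cong \RR^2 \setminus \{0\}$) is manifestly dissipative, and this is precisely what excludes isolated rank one groups from the theorem. Executing the Borel--Cantelli argument cleanly in the mixed Archimedean/$p$-adic setting of Case~2 is where most of the technical work is expected to lie, since one must coordinate matrix coefficient decay on the two factors with the lattice of return times in $A_1 \times A_2$.
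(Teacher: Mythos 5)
Your overall strategy -- invoke Theorem~\ref{costtheorem} by exhibiting an amenable, closed, unimodular subgroup $A$ with $G \acts (G/A)^2$ conservative, and reduce this to conservativity of $A \acts G/A$ via the induced-action correspondence -- is sound and is exactly the framing of the paper. The reduction you describe is correct. However, your choice of the subgroup $A$ is wrong in both cases, and the Borel--Cantelli mechanism you propose cannot rescue it.

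The problem is that $N \acts G/N$ is \emph{dissipative regardless of the rank of $G$}, so taking $A = N_1$ (resp.\ $A = N_1 \times N_2$) fails from the start. To see this, conjugate by the longest Weyl element so that the action becomes $N^- \acts G/N$; the open Bruhat cell gives a full-measure identification $G/N \cong N^- M A$ in which $N^-$ acts purely by left translation on the $N^-$-factor. This is a free, proper action with quotient $MA$, hence dissipative; there is no recurrence to exploit, and a Borel--Cantelli sum built from matrix coefficients of $L^2(G/N)$ would simply converge. The rank does not help because the obstruction is a drift of the Iwasawa--Bruhat cocycle in the $2\rho$ direction, which persists in higher rank. (Your intuition that ``isolated rank one is excluded because horospherical translation is dissipative'' is true of $N$ in \emph{every} rank; what changes in higher rank is not $N$ but the existence of a suitable \emph{larger} subgroup.) For the product case $A = N_1 \times N_2$, each factor is dissipative by the same argument, so the product action is dissipative as well; having $A_1 \times A_2$ inside the normalizer does not alter this.

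What the paper does (following Lemma~4.1 of~\cite{FMW}) is take $A$ to be the \emph{kernel of the modular character of the minimal parabolic}, i.e.\ $U = \ker \chi_P$ with $P = MAN$, so that $U = M\,\ker(2\rho|_A)\, N$ rather than just $N$. The point is that $U$ is then \emph{weakly normal}: for a.e.\ $g$ one computes via the Bruhat decomposition $g = p_1 w_0 p_2$ that $U \cap U^g$ is conjugate to $U \cap U^{w_0} \supseteq M\,\ker(2\rho|_A)$, which is noncompact exactly when the rank is at least two. Noncompactness of these stabilizers gives conservativity of $G \acts (G/U)^2$ for free -- no quantitative matrix coefficient or Borel--Cantelli estimate is needed. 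In the mixed Archimedean/$p$-adic case the paper takes $U = \ker(\chi_{P_1}\chi_{P_2}) < P_1 \times P_2$, and the same computation shows $U \cap U^{(w_0^1, w_0^2)}$ contains the kernel of the single linear form $\chi_{P_1}\chi_{P_2}$ on $\RR^{\mathrm{rank}(G_1)} \times \QQ_p^{\mathrm{rank}(G_2)}$, which is noncompact because the total rank is $\geq 2$. You correctly sensed that the combined rank $\geq 2$ is the essential point, but the mechanism by which it enters is the noncompactness of this common stabilizer, not the divergence of a return-time series for unipotent orbits.
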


We also apply the above theorem and an argument of Gaboriau(Corollary 2.55 of \cite{GaboriauNotes}) to answer a question of Gaboriau:

\begin{thm}\label{sl2qfp}
	The group $\SL(2,\QQ)$ has fixed price one.
\end{thm}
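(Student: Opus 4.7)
The plan is to realise $\SL(2,\QQ)$ as an overgroup of an $S$-arithmetic lattice to which Theorem~\ref{higherrankfp} applies, and then propagate fixed price one upward via a Gaboriau-style weak normality argument. Concretely, fix a prime $p$ and set $\Lambda := \SL(2,\ZZ[1/p])$, which sits diagonally as a nonuniform lattice in $H := \SL(2,\RR) \times \SL(2,\QQ_p)$ by the classical Borel--Harish-Chandra construction for $S$-arithmetic groups. Since $\SL(2,\RR)$ is a rank one real simple Lie group and $\SL(2,\QQ_p)$ is a $p$-adic split reductive group of rank one, Theorem~\ref{higherrankfp} gives that $H$ has fixed price one. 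By the lattice-to-ambient correspondence for cost in the lcscu setting, $\Lambda$ itself then has fixed price one as a countable group.

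Next I would check that $\Lambda$ is weakly normal in $\SL(2,\QQ)$ in the sense of the excerpt; in fact $\Lambda$ is commensurated. Given any $g \in \SL(2,\QQ)$, let $m$ be a common denominator of the entries of $g$ and $g^{-1}$; then for $k$ sufficiently large the principal congruence subgroup $\Lambda(m^k) \subseteq \Lambda$ satisfies $g^{-1} \Lambda(m^k) g \subseteq \Lambda$, because the deep congruence condition absorbs the denominators introduced by conjugation. Thus $\Lambda \cap g\Lambda g^{-1}$ has finite index in $\Lambda$ and is in particular infinite, so $\Lambda$ is weakly normal in $\SL(2,\QQ)$. Invoking Gaboriau's criterion in the form cited as Corollary~2.55 of \cite{GaboriauNotes}, every essentially free pmp action $\SL(2,\QQ) \acts (X,\mu)$ restricts to an essentially free action of $\Lambda$ of cost one, so $\cost(\SL(2,\QQ) \acts X) \leq \cost(\Lambda \acts X) = 1$, giving fixed price one.

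The main obstacle I anticipate is the lattice-to-lcscu passage in the first paragraph, since that is where the paper's cost machinery for locally compact groups has to connect with the classical countable-group notion in a way that transfers fixed price one from $H$ to $\Lambda$. The arithmetic steps — realising $\Lambda$ as a lattice in $H$ and verifying the commensuration property in $\SL(2,\QQ)$ — are entirely classical, and the closing argument is Gaboriau's weak normality criterion applied verbatim.
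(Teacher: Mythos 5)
Your argument is correct, but it takes a genuinely different route from the paper. The paper writes $\SL(2,\QQ)$ as the increasing union $\bigcup_n \SL(2,\ZZ[\frac{1}{p_1},\ldots,\frac{1}{p_n}])$, notes that each term is a lattice in $\SL(2,\RR)\times\SL(2,\QQ_{p_1})\times\cdots\times\SL(2,\QQ_{p_n})$ (fixed price one by Theorem \ref{higherrankfp}), and concludes by Gaboriau's \emph{increasing-union} criterion --- that is what Corollary 2.55 of \cite{GaboriauNotes} actually is, so your citation of it for the weak normality criterion is off; the latter is Crit\`eres VI.24 of \cite{GaboriauCout}, as recalled in the paper's introduction. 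Your route instead fixes a single prime, extracts fixed price one for $\Lambda=\SL(2,\ZZ[1/p])$ from the two-factor case of Theorem \ref{higherrankfp} together with the lattice-to-ambient transfer (which the paper explicitly asserts), and then climbs to $\SL(2,\QQ)$ via commensuration and weak normality. The commensuration computation is standard and correct; note only that the $p$-part of your common denominator $m$ is a unit in $\ZZ[1/p]$, so the congruence subgroup you need is really taken modulo the prime-to-$p$ part of $m^k$, which is still of finite index in $\Lambda$, and conjugating $1+m^kX$ by $g$ lands back in $M_2(\ZZ[1/p])$ once $k\ge 2$. What your approach buys: only the simplest two-factor instance of Theorem \ref{higherrankfp} and a single auxiliary ambient group are needed, at the cost of verifying the arithmetic commensuration lemma and invoking the weak normality criterion. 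What the paper's approach buys: no weak normality verification at all, since the subgroups are nested by construction, at the cost of using products with arbitrarily many $p$-adic factors.
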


It was previously known that $\SL(2,\QQ)$ has cost one actions, moreover that the supremum of the cost of its actions is at most $13/12$.

The proof of Theorem \ref{higherrankfp} is essentially a blend of a dynamical variation of Gaboriau's weak normality argument with the techniques developed in \cite{SamMiklos} for handling cost of nondiscrete groups. The dynamical element is heavily inspired by the recent work \cite{FMW}, and recovers some of its results. 

The definition of cost for lcscu groups first appears explicitly in \cite{Carderi}, \cite{SamMiklos}, and \cite{CGMTD}, although it already appears all but in name in Proposition 4.3 of \cite{KPV}, described as folklore.

A fundamental motivation for considering cost for such groups is the following: if one can establish fixed price for an lcscu group $G$, then one proves fixed price for all lattices in $G$ simultaneously. Further motivation is found by \cite{Carderi} and independently \cite{SamMiklos}: for groups of fixed price one, one also gets uniform vanishing of \emph{rank gradient} for so-called Farber sequences of lattices. Recall that if $\Gamma_n < G$ is a sequence of lattices, its rank gradient is
\[
	\lim_{n \to \infty} \frac{d(\Gamma_n) - 1}{\text{vol}(G/\Gamma_n)}.
\]
This was recently applied in \cite{FMW}, resolving a conjecture of Abért, Gelander, and Nikolov \cite{AGN} on vanishing of rank gradient for Farber sequences in higher rank semisimple Lie groups.

In \cite{Carderi} and \cite{CGMTD}, cost is defined for an action $G \acts (X, \mu)$ of a nondiscrete group by looking at its cross-section equivalence relations. In \cite{SamMiklos} an essentially equivalent but stochastic perspective was taken: by first proving that every essentially free pmp action of such a group is isomorphic to some ``point process action'', one can instead just consider the task of defining cost for invariant point processes on $G$. Then cost can be defined as a natural stochastic optimisation problem.

A \emph{point process on $G$} is a random discrete subset $\Pi$ of $G$. It is said to be \emph{invariant} if it is statistically homogeneous in the sense that the shifted point process $g\Pi$ has the same distribution as $\Pi$ for all $g$ in $G$. One then considers all connected \emph{factor graphs} $\mathscr{G}(\Pi)$, which are measurable and equivariant rules for equipping each sample of $\Pi$ with the structure of a connected graph. There is a natural notion of ``average degree'' for factor graphs, and one finally defines the cost of $\Pi$ using the infimum of the average degree of connected factor graphs $\mathscr{G}(\Pi)$. Thus, the cost is the cheapest way to connect up a point process. We will review the relevant definitions in the next section, and one can see \cite{SamMiklos} for a self-contained account.

The list of known results computing cost for nondiscrete groups (a locally compact version of the \emph{mercuriale}, à la Gaboriau) is:

\begin{enumerate}
    \item Groups of the form $G \times \ZZ$ \cite{SamMiklos}, or more generally of the form $G \times \Lambda$ where $\Lambda$ is a countable group containing an infinite order element,
    \item $\Isom(\HH^2)$, $\PSL(2, \RR)$, and $\SL(2, \RR)$\cite{CGMTD},
    \item The automorphism group of a regular tree \cite{CGMTD},
    \item Neretin's group, an unpublished result of the author and Gaboriau, and
    \item Higher rank semisimple real Lie groups, as well as finite products of at least two automorphism groups of regular trees (with possibly varying degrees)\cite{FMW}.
    \setcounter{mercuriale}{\value{enumi}}
\end{enumerate}

Items (2) and (3) above are shown by proving strong treeability results, whilst item (4) uses Gaboriau's criterion and the presence of an open amenable weakly normal subgroup.

Theorem \ref{costtheorem} appends the following groups to the mercuriale:

\begin{enumerate}
\setcounter{enumi}{\value{mercuriale}}
\item Products of noncompact lcscu groups of the form $G_1 \times G_2$, where $G_1$ is a higher rank semisimple real Lie group, and
\item Products of noncompact lcscu groups of the form $G_1 \times G_2$, where $G_1$ is a rank one real Lie simple group and $G_2$ is a $p$-adic split reductive group of rank at least one.
\end{enumerate}

Moreover, any product of such groups with an arbitrary lcscu group also has fixed price one.

Items (1), (5), (6), and (7) have in common that they use a ``weak containment'' type notion which we now discuss.

A point process $\Pi$ \emph{factors} onto another point process $\Upsilon$ if there exists a measurable and equivariantly defined map $\Phi$ such that $\Phi(\Pi)$ has the same distribution as $\Upsilon$. A fundamental fact is that then we have $\cost(\Pi) \leq \cost(\Upsilon)$. The inequality is in this direction as $\Upsilon$ is a ``simpler'' point process than $\Pi$, and thus has fewer connected factor graphs in a sense. Note that this fact also implies cost is an isomorphism invariant. 

Kechris showed \cite{Kechris} that if $\Gamma \acts^\alpha (X, \mu)$ and $\Gamma \acts^\beta (Y, \nu)$ are essentially free pmp actions of a finitely generated group $\Gamma$ and $\beta$ is \emph{weakly contained} in $\alpha$, then $\cost(\alpha) \leq \cost(\beta)$. This was later extended to nonfree actions via groupoid cost in \cite{AbertWeiss}, where it was also shown that the Bernoulli shift $\Gamma \acts [0,1]^\Gamma$ has \emph{maximal} cost. Tucker-Drob extended this to show that every free pmp action has the same cost as its Bernoulli extension \cite{RobinWeakCon}.

In \cite{SamMiklos} a new notion was introduced, modelled on the aforementioned notion of weak containment. One says that $\Pi$ \emph{weakly factors} onto $\Upsilon$ if there exists a sequence of factors $\Phi_n(\Pi)$ which weakly converge to $\Upsilon$. Informally, this means that if one observes $\Phi_n(\Pi)$ in any compact window region of $G$, the resulting process looks more and more like $\Upsilon$ in the statistical sense. The name ``weak factoring'' thus reflects the fact the notion is a weakening of factoring and concerns weak convergence. For compactly generated groups then the cost monotonicity statement $\cost(\Pi) \leq \cost(\Upsilon)$ holds (a weak version of this was shown in \cite{SamMiklos}, the full version can be found in \cite{FMW}).

The following strategy for showing that a group has fixed price one was pioneered in \cite{SamMiklos} and recently applied in \cite{FMW}:
\begin{enumerate}
	\item Identify a point process $\Upsilon$ with cost one, and
	\item Show that every essentially free point process $\Pi$ weakly factors onto $\Upsilon$.
\end{enumerate}

The strategy works due to the monotonicity of cost for weak factoring. To employ it, we introduce the notion of the \emph{factor of IID marking} $[0,1]^\Pi$ of a point process $\Pi$. This process has the same points as $\Pi$, but each point is equipped with a $\texttt{Unif}[0,1]$ random mark. Thus formally speaking it is a ``$[0,1]$-marked point process''. All of the aforementioned results and definitions apply equally well to point processes with marks from any complete and separable metric space. If $\Upsilon$ is a factor of the IID marking $[0,1]^\Pi$, then we say $\Upsilon$ is a \emph{factor of IID factor} of $\Pi$.

It was shown in \cite{SamMiklos} that every essentially free point process $\Pi$ weakly factors onto its own IID marking $[0,1]^\Pi$. This is the analogue of the Abért-Weiss theorem \cite{AbertWeiss}. As weak factoring is a transitive notion (see Theorem 2.24 of \cite{FMW} for a proof), in the above strategy one can use factor of IID factors of $\Pi$ in constructing the weak factor. This is a significantly more tractable problem, first used in \cite{SamMiklos} to prove fixed price for groups of the form $G \times \ZZ$, and recently used in \cite{FMW} to prove fixed price for higher rank semisimple Lie groups and products of automorphism groups of regular trees. 

This strategy is again employed in this paper to establish fixed price for a certain class of groups. 

We now discuss the relation of the present work to its source of inspiration \cite{FMW}, itself inspired by the recent beautiful paper \cite{BCP}. In \cite{BCP} a natural limiting object of the Poisson Voronoi tessellation on $\HH^2$ was identified and used to prove results on the Cheeger constants of large genus surfaces. This model has since been shown to exist for hyperbolic spaces of any dimension \cite{IPVT}, and indeed any Riemannian symmetric space \cite{FMW}. This object, the \emph{ideal Poisson Voronoi tessellation} (IPVT) is an invariant random tessellation of the space into infinite volume cells. It is a canonical object associated to Lie groups and merits further study.

In \cite{FMW} it was identified that the IPVT can be produced as a factor of a \emph{corona action}, which can be identified with the Poisson point process on a certain homogeneous space $G/U$. A highly curious property of the IPVT was identified in higher rank: not only are every pair of cells adjacent, but adjacent along a noncompact region. The IPVT is easily produced as a weak factor of the IID marked Poisson point process, so when constructing factor graphs for the purpose of computing cost one is allowed the randomness of an independent sample of it (see Theorem 2.25 of \cite{FMW}).

To prove fixed price one it therefore suffices to show that the independent coupling of the Poisson point process and the corona action has cost one. The authors achieve this in \cite{FMW} by constructing factor graphs with two types of edges. The first comes from the amenability of the associated subgroup $U < G$, which essentially says that the points of the Poisson inside any particular cell of the IPVT are ``hyperfinite''. The second comes from the aforementioned curious property, and allows one to find an arbitrarily small density of edges crossing between points of the Poisson in different cells. 

In this paper, to prove Theorem \ref{costtheorem} we work exclusively with amenable subgroups $A < G$ satisfying the additional conservativity assumption. We produce a different object as a weak factor of the IID marked Poisson via variation of the ``propagation'' technique introduced in \cite{SamMiklos}. This limiting process is referred to as the Cox process driven by $G/A$. Unlike corona actions (which are point processes on Busemann boundaries of appropriate $G$-spaces), the Cox process is itself a point process on $G$. We use amenability of the subgroup $A$ to construct average degree two ``leafwise hyperfinite'' graphings, and then use conservativity to find arbitrarily sparse sets of edges connecting up the leafwise graphings. This concludes the proof by cost monotonicity. This recovers the real semisimple Lie group case of fixed price one for \cite{FMW}, but not the case of products of trees with varying degrees.

\textbf{Outline of paper}: In Section \ref{Background} we give the necessary definitions on point processes and cost. In Section \ref{weakfactorsection} we execute the first step of the proof strategy and show that the Cox process is a weak factor of IID. In Section \ref{costonesection} we execute the second step and show that this Cox process has cost one, under the additional assumption of conservativity. In Section \ref{applicationssection} we prove Theorem \ref{higherrankfp} by showing they admit appropriate amenable subgroups as well as proving Theorem \ref{sl2qfp}.

For readers unfamiliar with point processes, the paper \cite{SamMiklos} has a quite self-contained exposition of the relevant theory in the notation and language that we will use. A certain familiarity with that paper will be necessary to understand this one. For further resources, a very basic introduction to point processes is \cite{Kingman}. The recent book \cite{LastPenrose} is a helpful resource, as well as the lecture notes \cite{Bacc}. We will also cite the reference works \cite{DVJ1}, \cite{DVJ2}, and \cite{Kallenberg}. 

\textbf{Acknowledgements}: The author wishes to acknowledge Miklós Abért, Miko\l{}aj Fr\k{a}czyk, Antoine Poulin, Anush Tserunyan, and Konrad Wrobel for their encouragement and useful conversations that led to the present work.

\section{Background}\label{Background}

Let $G$ denote a locally compact second countable unimodular (lcscu) group. If $G \acts (X, \mu)$ is an essentially free probability measure preserving action on a standard Borel probability space $(X, \mu)$, then one can define the \emph{cost} of the action using what are called cross-section equivalence relations. An alternative approach to defining cost was developed in \cite{SamMiklos}, by using the fact that all such actions are isomorphic to \emph{point process actions}. As cost is an isomorphism invariant, there is no loss in generality in just considering cost for point process actions. We now recall the key definitions.

\subsection{Point processes on homogeneous spaces}

Let $G$ denote a locally compact second countable (lcsc) group. We will typically assume $G$ is also unimodular, although some of what follows holds without that assumption. Such groups admit a proper\footnote{A metric is \emph{proper} if closed balls with respect to it are compact.} left-invariant metric $d$ which we fix. They also admit a left invariant Haar measure $\lambda$, which we also fix. We will assume that $G$ is noncompact and nondiscrete (that is, $\lambda(G) = \infty$ and $\lambda$ is atomless).

The goal of this article is the following: we show that if $G$ contains an appropriate amenable subgroup, then $G$ has fixed price one, meaning all of its essentially free probability measure preserving (pmp) actions have cost one. We review the necessary definitions. Note that every essentially free pmp action of $G$ is isomorphic to some point process, which we define now.

We will study point processes on homogeneous spaces of $G$. Explicitly, this means the study of point processes on spaces of the form $G/H$ where $H < G$ is a closed subgroup. Throughout the article, $H$ always refers to a closed subgroup of $G$. We will require the existence of an invariant measure on $Q:=G/H$, which in our situation is equivalent with saying $H$ is also unimodular. Note that $G$ itself is a homogeneous space. Recall that the Haar measure $\lambda_H$ on $H$ and $\lambda_Q$ on $G/H$ can be chosen so that the fundamental equation holds:
\begin{equation}\label{haarrel}
	\lambda(B) = \int_{Q} \lambda_{gH}(B) d\lambda_Q(gH),
\end{equation}
where $\lambda_{gA}(B) = \lambda_A(g^{-1}B)$ is the natural Haar measure on the coset $gH$. 

The \emph{configuration space} of $G/H$ is
\[
	\MM(G/H) = \{ \omega \subset G \mid \omega \text{ is locally finite} \}.
\]

Let $\Xi$ denote a complete and separable metric space (csms) of marks. We think of $\Xi$ as labels or colours. Frequently, $\Xi = [0,1]$ or $\Xi = [d]$, where $[d] = \{1, 2, \ldots, d\}$. The \emph{$\Xi$-marked configuration space} of $G/H$ is the space of configurations where each point has a label from $\Xi$ attached. Formally, it is
\[
	\Xi^\MM = \{ \omega \in \MM(G/H \times \Xi) \mid \pi(\omega) \in \MM(G/H) \text{ and if } (g, \xi_1) \in \omega \text{ and } (g, \xi_2) \in \omega \text{ then } \xi_1 = \xi_2 \},
\]
where $\pi : G/H \times \Xi \to G/H$ is the projection. 

\begin{defn}
	A \emph{point process} on $G/H$ is a random element $\Pi$ of $\MM(G/H)$. Formally speaking, this means there is an auxilliary probability space $(\Omega, \PP)$, and $\Pi \colon \Omega \to \MM(G/H)$ is a measurable map. As is standard, we will use expressions like $\PP[\Pi \in A]$ for what is formally $\PP[\{\omega \in \Omega \mid \Pi(\omega) \in A\}]$. All definitions below apply for $\Xi$-marked point processes as well, we only make this explicit if there is some care required.
	
	The \emph{distribution} or \emph{law} of $\Pi$ is then the pushforward $\mu := \Pi_*(\PP)$.
	
	The point process is \emph{invariant} if for all $g \in G$, $g\Pi$ and $\Pi$ have the same distribution. Equivalently, if $G \acts (\MM(G/H), \mu)$ is a probability measure preserving (pmp) action. We say that $\Pi$ is \emph{essentially free} if $\stab_G(\Pi) = 1$ almost surely. That is, if the associated pmp action $G \acts (\MM(G/H), \mu)$ is essentially free.
\end{defn}

\begin{remark}
	If $\Pi$ is a $\Xi$-marked point process, then we will abuse notation and write ``$g \in \Pi$'' to more formally mean that there exists some $\xi_g \in \Xi$ such that $(g, \xi_g) \in \Pi$. In that case, we also write $\xi_g$ for the label of $g \in \Pi$.
\end{remark}

One can define the configuration space for any metric space. If the underlying metric space is complete and separable, then so too is its configuration space (see Theorem A2.6.III of \cite{DVJ1}). We will not use the metric structure explicitly, but we will make use of its measurable structure. 

\begin{defn}
	Let $B \subseteq G$ be Borel. Its associated \emph{point count functional} is the map $\omega \mapsto \omega(B)$, where $\omega(B) = \abs{\omega \cap B}$. The measurable structure on $\MM(G/H)$ is exactly so that all of the point count functionals are measurable.
\end{defn}

To explain the notation, note that one can identify $\omega$ with the counting measure $\sum_{x \in \omega} \delta_x$, and $\omega(B)$ is then the measure of $B$. 

\begin{defn}
	The \emph{intensity} of an invariant point process $\Pi$ on $G/H$ is
	\[
		\intensity(\Pi) = \frac{\EE[\Pi(U)]}{\lambda_Q(U)},
	\]
	where $0 < \lambda_Q(U) < \infty$ is a measurable subset of $G/H$. 
\end{defn}

The intensity is well-defined (in the sense that it doesn't depend on the choice of $U$) by the uniqueness up to scaling of the measure $\lambda_Q$.

\begin{thm}[Campbell theorem]
	Let $\Pi$ be an invariant point process on $G/H$ and $f \colon G/H \to \RR_{\geq 0}$ a measurable function. Then
	\[
		\EE\left[\sum_{x \in \Pi} f(x)\right] = \intensity(\Pi) \int_{G/H} f(x)d\lambda_Q(x).
	\]
\end{thm}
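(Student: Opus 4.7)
The plan is to prove the identity first for indicator functions of Borel sets, then extend by the standard measure-theoretic machine (linearity $\Rightarrow$ simple functions, monotone convergence $\Rightarrow$ arbitrary nonnegative measurables).

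For $f = \mathbf{1}_B$, the left-hand side is $\mathbb{E}[\Pi(B)]$, which defines a Borel measure $\nu$ on $G/H$ via $\nu(B) := \mathbb{E}[\Pi(B)]$; this is sometimes called the intensity measure of $\Pi$. The first key step is to show $\nu$ is $G$-invariant. For $g \in G$, invariance of $\Pi$ gives $g\Pi \stackrel{d}{=} \Pi$, so $\nu(gB) = \mathbb{E}[\Pi(gB)] = \mathbb{E}[(g^{-1}\Pi)(B)] = \mathbb{E}[\Pi(B)] = \nu(B)$, where measurability of the point-count functional (as recalled in the excerpt) is what lets us talk about $\mathbb{E}[\Pi(B)]$ in the first place.

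By the essentially unique-up-to-scaling $G$-invariant Radon measure on the homogeneous space $Q = G/H$, we conclude $\nu = c \cdot \lambda_Q$ for some constant $c \in [0, \infty]$. The constant is pinned down by evaluating both sides on the fixed set $U$ with $0 < \lambda_Q(U) < \infty$ appearing in the definition of intensity: $c = \nu(U)/\lambda_Q(U) = \mathbb{E}[\Pi(U)]/\lambda_Q(U) = \mathrm{int}(\Pi)$. (One should note that in principle $\nu(U)$ could be infinite; but this just means $\mathrm{int}(\Pi) = \infty$ and both sides of the Campbell identity are infinite for every $f$ not almost-everywhere zero, so the equality still holds in $[0, \infty]$.) This establishes the claim for indicators.

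For general nonnegative measurable $f$, write $\sum_{x \in \Pi} f(x) = \int_{G/H} f(x) d\Pi(x)$ by identifying $\Pi$ with its counting measure $\sum_{x \in \Pi} \delta_x$. Both sides of the desired identity are linear and monotone in $f$ and agree on indicators; linearity extends the equality to nonnegative simple $f$, and monotone convergence (applied on the left via Tonelli/Fubini to swap $\mathbb{E}$ and the sum, which is legitimate in $[0,\infty]$) extends it to all nonnegative measurable $f$. There is no real obstacle; the only subtlety worth being careful about is the possibility that $\mathrm{int}(\Pi) = \infty$, in which case the statement must be read in $[0,\infty]$, and the invariance argument for $\nu$ together with uniqueness of Haar measure on $G/H$ handles that case uniformly.
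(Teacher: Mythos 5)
Your proof is correct and is exactly the ``standard monotone class argument'' the paper invokes without writing out: the indicator case rests on the same uniqueness-up-to-scaling of the invariant measure on $G/H$ that the paper already uses to justify well-definedness of the intensity, and the extension to general nonnegative $f$ is the usual linearity-plus-monotone-convergence step. The only caveat (which you rightly flag) is that the Haar-uniqueness step presupposes the intensity measure is locally finite, i.e.\ $\intensity(\Pi)<\infty$, which is the standing assumption in the paper anyway.
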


The proof of this is a standard monotone class argument. Note that the theorem with $f(x) = \1[x \in U]$ recovers the definition of intensity. We will make use of the following simple application of the Campbell theorem, which is likely folklore:

\begin{cor}\label{Meckecor}
	Suppose $\Pi$ is an invariant point process on $G/H$, and $B \subseteq G/H$ has zero measure. Then $\PP[\Pi(B)=0] = 1$. That is, almost surely no point of $\Pi$ is in $B$.
\end{cor}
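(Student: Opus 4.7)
The plan is to apply the Campbell theorem directly with the test function $f = \1_B$, the indicator of $B$. Under this choice the sum $\sum_{x \in \Pi} f(x)$ equals $\Pi(B)$, so the left-hand side of Campbell is $\EE[\Pi(B)]$, while the right-hand side is $\intensity(\Pi) \cdot \lambda_Q(B) = \intensity(\Pi) \cdot 0 = 0$. Hence $\EE[\Pi(B)] = 0$. Because $\Pi(B)$ is a nonnegative integer-valued random variable, vanishing expectation forces $\Pi(B) = 0$ almost surely, which is exactly the claim.

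The proof really is a one-liner given Campbell; there is no serious obstacle. The only mild subtlety is the case where $\intensity(\Pi) = \infty$, in which the product on the right-hand side is the indeterminate form $\infty \cdot 0$. One resolves this by adopting the standard measure-theoretic convention $\infty \cdot 0 = 0$, or alternatively by truncating: restrict attention to $\Pi \cap K$ for a relatively compact $K$ with $B \subseteq K$ and apply Campbell inside $K$ (where the expected count is finite for any reasonable $\Pi$), then exhaust $G/H$ by such $K$. Either route gives the conclusion without difficulty.
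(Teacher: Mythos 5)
Your proof is correct and is essentially the paper's own argument: the paper's one-line proof reads ``apply the Campbell theorem with $f(x) = \1[x \not\in B]$,'' which is evidently a typo for $f(x) = \1[x \in B]$, i.e.\ exactly your choice, after which $\EE[\Pi(B)] = \intensity(\Pi)\,\lambda_Q(B) = 0$ forces $\Pi(B)=0$ almost surely. Your remark on the $\infty \cdot 0$ degeneracy when the intensity is infinite is a sensible extra precaution, though the paper works throughout with finite-intensity processes so it does not arise there.
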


\begin{proof}
	Apply the Campbell theorem with $f(x) = \1[x \not\in B]$. 
\end{proof}

We think of Corollary \ref{Meckecor} in the following way: $B$ is a subset of ``bad'' points. The corollary states that almost surely no point of an invariant point process is bad. Note that the corollary is immediate for the Poisson point process (to be discussed next). This is in fact the case we will use, but we include the above lemma to illustrate the Campbell theorem.

\subsection{Examples of point processes}
\begin{example}
	Suppose $\Gamma < G$ is a lattice, that is, a discrete subgroup such that $G \acts G/\Gamma$ carries an invariant probability measure. Note that all cosets $a\Gamma \in G/\Gamma$ are configurations, and thus the invariant probability measure defines an invariant point process on $G$.
\end{example}

\begin{example}
	A point process $\Pi$ on $G/H$ is said to be \emph{Poisson with intensity $t > 0$} if the following two conditions are satisfied:
	\begin{enumerate}
		\item For all Borel $B \subseteq G/H$, the random variable $\Pi(B)$ is Poisson with parameter $t\lambda_Q(B)$, that is
		\[
			\PP[\Pi(B) = k] = e^{-t\lambda_Q(B)} \frac{(t\lambda_Q(B))^k}{k!}.
		\]
		\item For all Borel sets $B_1, B_2 \subseteq G/H$ which are disjoint, the random variables $\Pi(B_1)$ and $\Pi(B_2)$ are independent.  
	\end{enumerate}
\end{example}

\begin{example}
	Suppose $\Pi$ is an invariant point process. We denote by $[0,1]^\Pi$ its \emph{IID marking}. This is the $[0,1]$-marked point process which is $\Pi$ decorated with independent $\texttt{Unif}[0,1]$ labels.
\end{example}

Another name for the IID marking is \emph{Bernoulli extension}. 

\begin{example}
Suppose $\eta$ is an invariant random measure on $G$. Then the \emph{Cox process} driven by $\eta$ is the Poisson point process with respect to $\eta$.
\end{example}

\begin{defn}

Suppose $H < G$ is a closed and unimodular subgroup. One can view the Poisson point process $\Upsilon$ on $G/H$ as defining an invariant random measure on $G$, namely
\[
	\eta_\Upsilon = \sum_{gH \in \Upsilon} \lambda_{gH}.
\]
We will refer to the Cox process driven by this invariant random measure as \emph{the Cox process driven by $G/H$}.

\end{defn}

\begin{lem}\label{coxintensity}
	The Cox process $\Pi$ driven by $G/H$ has intensity one.
\end{lem}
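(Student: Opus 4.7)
The plan is to compute $\EE[\Pi(B)]$ for a Borel set $B \subseteq G$ with $0 < \lambda(B) < \infty$ and show it equals $\lambda(B)$; then by definition the intensity is one.

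Since $\Pi$ is, conditionally on $\eta_\Upsilon$, a Poisson point process with intensity measure $\eta_\Upsilon$, we have $\EE[\Pi(B) \mid \eta_\Upsilon] = \eta_\Upsilon(B)$ almost surely. Taking expectations and unpacking the definition of $\eta_\Upsilon$ yields
\[
    \EE[\Pi(B)] = \EE[\eta_\Upsilon(B)] = \EE\left[\sum_{gH \in \Upsilon} \lambda_{gH}(B)\right].
\]
Now $\Upsilon$ is an invariant point process on $G/H$ (the intensity-one Poisson point process), so I would apply the Campbell theorem to the nonnegative measurable function $f : G/H \to \RR_{\geq 0}$ defined by $f(gH) = \lambda_{gH}(B)$. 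This is well-defined because $\lambda_{gH}$ depends only on the coset $gH$, and it is measurable by a standard Fubini-type argument (the map $gH \mapsto \lambda_{gH}(B)$ is measurable since $B$ is Borel and the Haar measures on cosets fit together into the disintegration in \eqref{haarrel}). Campbell then gives
\[
    \EE\left[\sum_{gH \in \Upsilon} \lambda_{gH}(B)\right] = \intensity(\Upsilon) \int_{G/H} \lambda_{gH}(B) \, d\lambda_Q(gH).
\]

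Finally, the fundamental equation \eqref{haarrel} identifies the integral on the right-hand side as $\lambda(B)$, and since $\intensity(\Upsilon) = 1$ we conclude $\EE[\Pi(B)] = \lambda(B)$, giving $\intensity(\Pi) = 1$. The only step that requires a moment's care is the measurability of $gH \mapsto \lambda_{gH}(B)$, but this is immediate from the disintegration formula, so I do not expect any genuine obstacle here; the lemma is essentially a direct consequence of Campbell together with the fundamental Haar disintegration identity.
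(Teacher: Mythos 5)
Your proof is correct and follows essentially the same route as the paper: both reduce $\intensity(\Pi)$ to $\EE[\eta_\Upsilon(B)]$ and then identify $\int_{G/H}\lambda_{gH}(B)\,d\lambda_Q(gH)$ with $\lambda(B)$ via the Haar disintegration \eqref{haarrel}. The only cosmetic difference is that you invoke the Campbell theorem for $\Upsilon$ where the paper carries out the equivalent compound-Poisson (Wald-type) expectation by hand, restricting the sum to $\Upsilon\cap BH$.
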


\begin{proof}
	The intensity of $\Pi$ is simply the intensity of the driving measure (see Equation 13.7 of \cite{LastPenrose}). We therefore compute, using $U \subseteq G$ of unit volume:
	\[
		\intensity(\Pi) = \EE[\eta_\Upsilon(U)] = \EE\left[\sum_{gH \in \Upsilon} \lambda_{gH}(U)\right].
	\]
	Note that the only terms contributing to the sum are those $gH$ in $\Upsilon \cap UH$. There are $\mathtt{Pois}(\lambda_Q(UH))$ many of these, each contributing on average $\lambda(gH)(U)$ where $gH$ is chosen uniformly at random over $UH$. Thus
	\[
	\EE\left[\sum_{gH \in \Upsilon} \lambda_{gH}(U)\right] = \lambda_Q(UH)\frac{1}{\lambda_Q(UH)} \int_{UH} \lambda_{gH}(U)d\lambda_Q(gH) = \lambda_G(U),
	\]
	as desired.
\end{proof}

\begin{defn}
	Suppose $\Pi$ and $\Upsilon$ are invariant point processes. We say that $\Upsilon$ is a \emph{factor} of $\Pi$ if there exists a measurable and equivariant map $\Psi$ such that $\Psi(\Pi)$ has the same distribution as $\Upsilon$. 
\end{defn}

\begin{example}
	Suppose $\Pi$ is an invariant point process. Then its IID marking $[0,1]^\Pi$ factors onto the Poisson point process (of any intensity). For a proof, see for example Proposition 5.1 of \cite{SamMiklos}.
\end{example}

\begin{example}
	Suppose $\Pi$ is an invariant point process, and $\Upsilon$ is a factor of $\Pi$. Then the IID marking $[0,1]^\Pi$ factors onto the IID marking $[0,1]^\Upsilon$.
\end{example}

\subsection{Weak convergence}

\begin{defn}
Let $\Pi$ be a point process. Then the \emph{finite dimensional distributions} (or simply \emph{fidi's}) of $\Pi$ are the totality of random vectors
\[
	(\Pi(B_1), \Pi(B_2), \ldots, \Pi(B_k)) \in \NN_0^k
\]
for all $k \in \NN$ and choices of Borel sets $B_1, \ldots, B_k \subseteq G$. 
\end{defn}

Point processes are determined by their fidi's. In fact, one needs considerably fewer choices than ``all Borel sets''. 

\begin{defn}
	Let $\Pi$ be a point process. Then $B \subseteq G$ is a \emph{stochastic continuity set} for $\Pi$ if $\PP[\Pi \cap \partial B \neq \empt] = 0$, where $\partial B$ denotes the topological boundary of $B$.
\end{defn}

\begin{defn}
	Let $\Pi_n$ be a sequence of point processes, and $\Pi$ another point process. We say $\Pi_n$ \emph{weakly converges} to $\Pi$ if for each $k \in \NN$ and choices $B_1, B_2, \ldots, B_k$ of bounded stochastic continuity sets for $\Pi$, the corresponding fidi's of $\Pi_n$ converge to that of $\Pi$. Explicitly, for each vector $\alpha \in \NN_0^k$, we have
	\[
		\lim_{n \to \infty}\PP[(\Pi_n(B_1), \Pi_n(B_2), \ldots, \Pi_n(B_k) = \alpha] = \PP[(\Pi(B_1), \Pi(B_2), \ldots, \Pi(B_k) = \alpha].
	\]
\end{defn}

\begin{remark}
	As discussed in Proposition 11.1.VIII of \cite{DVJ2}, in order to verify weak convergence of random measures $\eta_n$ it suffices to check the weak convergence $\eta_n(B)$, where $B$ ranges over a \emph{countable} collection of compact sets.
\end{remark}

The following concept is central to the work on cost on nondiscrete groups:

\begin{defn}
	Suppose $\Pi$ and $\Upsilon$ are invariant point processes on $G$. We say that $\Pi$ \emph{weakly factors} onto $\Upsilon$ if there exists a sequence of factors $\Phi_n(\Pi)$ which weakly converge to $\Upsilon$. 
\end{defn}

\begin{example}
	Suppose $\Pi$ is an essentially free point process. Then Theorem 5.5 of \cite{SamMiklos} that $\Pi$ weakly factors onto its IID marking $[0,1]^\Pi$.
\end{example}

\begin{remark}
	It is not straightforward, but one can show that weak factoring is a transitive notion (see Theorem 2.24 of \cite{FMW}).
\end{remark}

\begin{thm}\label{convergencecondition}[See for example Theorem 4.17 of \cite{Kallenberg}]
	Let $\Pi_n$ be a sequence of Cox processes, driven by random measures $\eta_n$. Then $\Pi_n$ weakly converges if and only if and only if $\eta_n$ weakly converges. In that case the weak limit of $\Pi_n$ is a Cox process directed by the limit of $\eta_n$.
\end{thm}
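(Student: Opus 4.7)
The plan is to reduce both directions to convergence of Laplace functionals, using the fact that a Cox process is determined through a simple nonlinear transformation of its driving measure's Laplace functional. Recall that for a random measure $\eta$ on $G$ one sets $L_\eta(f) = \EE[\exp(-\eta(f))]$ with $\eta(f) = \int f \, d\eta$, and weak convergence $\eta_n \Rightarrow \eta$ (in the sense appropriate for random measures) is equivalent to $L_{\eta_n}(f) \to L_\eta(f)$ for every $f$ in the cone $C_c^+(G)$ of nonnegative continuous compactly supported functions. The same criterion characterizes weak convergence of point processes, viewed as integer-valued random measures.

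The first step is to establish the key identity. Conditionally on $\eta$, the Cox process $\Pi$ directed by $\eta$ is Poisson with intensity $\eta$, so the Laplace functional of a Poisson process combined with the tower property gives
\[
L_\Pi(f) \;=\; \EE\bigl[\EE[e^{-\Pi(f)}\mid \eta]\bigr] \;=\; \EE\bigl[\exp\bigl(-\eta(1 - e^{-f})\bigr)\bigr] \;=\; L_\eta(1 - e^{-f}).
\]
Now $f \mapsto 1 - e^{-f}$ sends $C_c^+(G)$ into itself (the image consists of compactly supported continuous functions taking values in $[0,1)$), and it is a bijection onto that subclass with inverse $g \mapsto -\log(1-g)$.

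For the forward implication, suppose $\eta_n \Rightarrow \eta$. Fix $f \in C_c^+(G)$ and set $g = 1 - e^{-f} \in C_c^+(G)$. Then $L_{\Pi_n}(f) = L_{\eta_n}(g) \to L_\eta(g) = L_\Pi(f)$, and since $f$ was arbitrary, the Cox processes $\Pi_n$ converge weakly to the Cox process directed by $\eta$.

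For the converse, I would first use weak convergence of $\Pi_n$ to deduce tightness of $(\eta_n)$: tightness of $(\Pi_n(B))$ for compact $B$ combined with the identity $\EE[\Pi_n(B)\mid \eta_n] = \eta_n(B)$ controls the finite-dimensional distributions of $\eta_n$, yielding tightness via the standard criterion for random measures (Theorem 4.9 of \cite{Kallenberg}). Along any weakly convergent subsequence $\eta_{n_k} \Rightarrow \eta$, the forward direction forces the corresponding $\Pi_{n_k}$ to converge to the Cox process directed by $\eta$, which must coincide with the given weak limit of $\Pi_n$. The identity $L_\Pi(f) = L_\eta(1 - e^{-f})$ then determines $L_\eta$ on the subclass $\{g \in C_c^+ : g < 1\}$; a scaling argument (apply the same identity to $sf$ for $s > 0$ and let $s$ vary) propagates this to all of $C_c^+$, so the law of $\eta$ is uniquely determined by that of $\Pi$. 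Hence every subsequential limit equals $\eta$, and $\eta_n \Rightarrow \eta$. I expect the main obstacle to lie in this converse: specifically the tightness step for $\eta_n$ and the uniqueness argument showing that the values of $L_\eta$ on $\{1-e^{-f}\}$ pin down $L_\eta$ on all of $C_c^+$.
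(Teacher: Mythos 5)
The paper does not actually prove this statement --- it is quoted with a citation to Theorem 4.17 of \cite{Kallenberg} --- and your Laplace-functional argument is precisely the standard proof given in that source, so you are following essentially the same route. The proof is correct: the identity $L_\Pi(f) = L_\eta(1-e^{-f})$, the transfer of tightness from $\Pi_n(B)$ to $\eta_n(B)$, and the determination of $L_\eta$ on all of $C_c^+$ from its values on $\{g \in C_c^+ : g < 1\}$ (made precise via analyticity of $s \mapsto \EE[e^{-s\eta(g)}]$ rather than scaling alone) all go through.
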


\subsection{Factor graphs}

\begin{defn}
	Let $\Pi$ be an invariant point process. A \emph{factor graph} of $\Pi$ is a measurable and equivariantly defined graph with vertex set $\Pi$. More formally, it is a measurable and equivariant map $\mathscr{G} : \MM(G/H) \to \MM(G/H \times G/H)$ such that $\mathscr{G}(\Pi) \subseteq \Pi \times \Pi$.
\end{defn}

\begin{remark}
	Formally, the above definition defines \emph{directed} factor graphs. One can easily adapt the definition to have undirected graphs, multigraphs, labelled graphs, et cetera. We will follow the convention of probabilists and leave the space implicit.
\end{remark}

\begin{example}
	The \emph{distance-$R$} factor graph of a point process $\Pi$ on $G$ is
	\[
		\mathscr{D}_R(\Pi) = \{ (g, h) \in \Pi \times \Pi \mid d(g, h) < R \}.
	\]
\end{example}

\begin{defn}
	Let $\Pi$ be an invariant point process. A \emph{factor of IID factor graph} of $\Pi$ is any factor graph of the IID marking $[0,1]^\Pi$.
\end{defn}

\begin{example}
	Let $\Pi$ be an invariant point process on $G$. The factor of IID \emph{$t$-cutoff $R$-star factor graph} $\bigstar_{t,R}$ is defined 
	\[
	\bigstar_{t,R}([0,1]^\Pi) = \{(g, h) \in \Pi \mid g \neq h, \xi_g < t \text{ and } d(g, h) < R \}.
	\]
	In words, points with label lower than the threshold $t$ connect to all points in their $R$-neighbourhood. 
\end{example}

This factor graph is essentially a percolated version of the distance-$R$ factor graph.

\subsection{The Palm measure}

Suppose $\Pi$ is a point process on $G$. We denote the identity element by $0 \in G$ and think of it as a ``root'' of the space. We would like to condition on $\Pi$ to contain $0$, but this is a probability zero event. One can condition on $\Pi$ to contain a point in a very small ball about $0$ and then take a limit, but we will instead take the alternative equivalent approach.

\begin{defn}
	The \emph{rooted configuration space} of $G$ is 
	\[
		\MMo(G) = \{ \omega \in \MM(G) \mid 0 \in \omega \}.
	\]
	More generally, the \emph{$\Xi$-marked rooted configuration space} of $G$ is
	\[
		\Xi^{\MMo(G)} = \{ \omega \in \MM(G) \mid \exists \xi \in \Xi \text{ such that } (0, \xi) \in \omega \}.
	\]	
	
	Suppose $\Pi$ is an invariant point process on $G$ of finite, nonzero intensity. Its \emph{Palm measure} is the probability measure $\mu_0$ on $\MMo$ defined for measurable $C \subseteq \MMo(G)$ by 
	\[
		\mu_0(C) = \frac{1}{\intensity(\Pi)} \EE\left[ \#\{g \in U \mid g^{-1}\Pi \in C\} \right]
	\]
	We denote by $\Pi_0$ a random variable with distribution $\mu_0$ and refer to it as a \emph{Palm version} of $\Pi$. We also define these notions for $\Xi$-marked point processes in the same way.
\end{defn}

\begin{example}
	The Palm measure of a lattice shift $G \acts G/\Gamma$ is simply $\delta_\Gamma$, thus $\Gamma$ itself is a Palm version.
\end{example}

\begin{example}
	A Palm version for the Poisson point process $\Pi$ is $\Pi_0 = \Pi \cup \{0\}$. This property characterises the Poisson point process.
\end{example}

\begin{example}
	Suppose $\Pi_0$ is a Palm version. Then $[0,1]^{\Pi_0}$ is a Palm version for the IID marking $[0,1]^\Pi$.
\end{example}

\begin{example}
	Let $\Pi$ be the Cox process driven by $G/H$, and $\Pi_0'$ an independent sample of the Palm version of the Poisson on $H$. Then $\Pi \cup \Pi_0'$ is a Palm version of $\Pi$.
\end{example}

\begin{defn}
	The \emph{rerooting equivalence relation} $\Rel$ on $\MMo(G)$ is the restriction of the orbit equivalence relation of $G \acts \MM(G)$ to $\MMo(G)$. Explicitly, for a rooted configuration $\omega \in \MMo(G)$ we declare
	\[
		\omega \sim g^{-1}\omega \text{ for all } g \in \omega.
	\]
\end{defn}

If $\Pi$ is an invariant point process on a unimodular group $G$, then $(\MMo(G), \Rel, \mu_0)$ is a pmp countable Borel equivalence relation (cber) called the \emph{Palm equivalence relation} of $\Pi$. If $\Pi$ is also essentially free, then there is a correspondence between graphings of its Palm equivalence relation and factor graphs of $\Pi$. This fundamental fact about the Palm equivalence relation and its connection with factor objects, was first observed in \cite{Mellick}. For example, let $\mathscr{G}(\Pi)$ denote a factor graph of $\Pi$, a point process with distribution $\mu$. Then $\mathscr{G}(\Pi)$ has a well-defined restriction to $\MMo(G)$, \emph{despite} the fact that this is a $\mu$ measure zero subset. In fact, this restriction is defined $\mu_0$ almost everywhere, where $\mu_0$ is the Palm measure. Next observe that if $\omega \in \MMo(G)$ has trivial stabiliser, then the map
\[
	p_\omega(g) : \omega \to [\omega]_\Rel \text{ given by } g \mapsto g^{-1}\omega
\]
is \emph{bijective}. Thus we can look at the graph structure on $\mathscr{G}(\Pi_0)$ as defining a graph structure on $[\Pi_0]_\Rel$. The converse holds too. In this way we have seen that there is a one to one correspondence
\[
	\{\text{factor graphs of } \Pi \} \leftrightsquigarrow \{\text{subgraphs of } (\Rel, \widetilde{\mu_0}) \},
\]
where $\widetilde{\mu_0}$ denotes the natural lift of $\mu_0$ to $\Rel$. 

Similar correspondences exist for other factor objects. We freely make such identifications wherever necessary. We now discuss another case of this correspondence principle relevant to the present work.

\begin{defn}
	Let $\Pi$ be an invariant point process. A \emph{factor equivalence relation} $\sim$ on $\Pi$ is a measurably and equivariantly defined equivalence relation the set $\Pi$. To formalise this, one can consider $\sim$ as a factor graph of $\Pi$ whose connected components are complete graphs.
\end{defn}

Under the above mentioned correspondence, factor equivalence relations are exactly the same thing as subequivalence relations of the Palm equivalence relation. 

The following definition was introduced in \cite{HolroydPeres} and later used in \cite{Timar}:

\begin{defn}
	Suppose $\Pi$ is an invariant point process on $G$. A \emph{one-ended clumping} is a sequence of factor equivalence relations $\sim_n(\Pi)$ such that
	\begin{itemize}
		\item All equivalence classes of $\sim_n(\Pi)$ are finite,
		\item It increases: $\sim_n(\Pi) \subseteq \sim_{n+1}(\Pi)$, and
		\item The union of all $\sim_n(\Pi)$ is the trivial equivalence relation (just one class).
	\end{itemize}
\end{defn}

It was observed in \cite{Mellick} that an essentially free point process $\Pi$ on $G$ admits a one-ended clumping if and only if $G$ is amenable. We make use of a generalisation of that fact.

\begin{defn}
	Suppose $(X, \Rel, \mu)$ is a pmp cber. We say it is \emph{amenable} if it admits an invariant mean, in the sense that for each $x \in X$ one has a function $m_x : \ell^\infty([x]_\Rel) \to \RR$ such that $m_x(1) = 1$ and $m_x = m_y$ if $x \Rel y$. It is \emph{$\mu$-amenable} if the functions $m_x$ are only defined $\mu$ almost everywhere.
	
	A pmp cber is \emph{hyperfinite} if it can be expressed as a countable union of equivalence relations with all classes finite. It is \emph{$\mu$-hyperfinite} if this holds $\mu$ almost everywhere.
\end{defn}

It is known that $\mu$-amenability and $\mu$-hyperfiniteness are equivalent. Observe that under the correspondence, a point process $\Pi$ admits a one-ended clumping if and only if the associated Palm equivalence relation is hyperfinite.

\begin{defn}
	Suppose $H < G$ is a closed subgroup. An invariant point process $\Pi$ is said to be \emph{leafwise} if $\Pi_0 \cap H$ is the Palm version of an invariant point process on $H$.
\end{defn}

\begin{example}
	Suppose $H < G$ is a closed and unimodular subgroup. Then the Cox process driven by $G/H$ is leafwise.
\end{example}

\begin{defn}
	Suppose $H < G$ is a closed subgroup. A \emph{leafwise one-ended clumping} is a sequence of factor equivalence relations $\sim_n(\Pi)$ such that
	\begin{itemize}
		\item All equivalence classes of $\sim_n(\Pi)$ are finite,
		\item It increases: $\sim_n(\Pi) \subseteq \sim_{n+1}(\Pi)$, and
		\item The union of all $\sim_n(\Pi)$ is the equivalence relation on $\Pi$ given by $x \sim y$ if and only if $xH = yH$.
	\end{itemize}
\end{defn}

\begin{defn}
	Suppose $H < G$ is a closed subgroup. Let $\Rel\Rel$ denote the \emph{restricted rerooting} subequivalence relation of the rerooting equivalence relation, defined by for $\omega \in \MMo(G)$ by
	\[
		\omega \sim h^{-1}\omega \text{ for all } h \in \omega \cap H.
	\]
\end{defn}

Under the correspondence principle, we have:

\begin{lem}\label{leafwiseclumpiffhyperfinite}
	A leafwise point process admits a leafwise one-ended clumping if and only if the restricted rerooting equivalence relation $(\MMo, \Rel\Rel, \mu_0)$ is $\mu_0$-hyperfinite.
\end{lem}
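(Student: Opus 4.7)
The plan is to show that this lemma is a direct consequence of the correspondence principle discussed immediately before its statement, applied to the specific subequivalence relation of interest. The work is simply in identifying what the ``trivial leafwise'' equivalence relation $x \sim y \iff xH = yH$ on $\Pi$ corresponds to on $\MMo$ under the correspondence between factor equivalence relations of $\Pi$ and subequivalence relations of the Palm equivalence relation $(\MMo, \Rel, \mu_0)$.

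The key identification is the following. Fix a rooted configuration $\omega \in \MMo$, and recall the bijection $p_\omega \colon \omega \to [\omega]_\Rel$ given by $g \mapsto g^{-1}\omega$. A factor equivalence relation $\sim$ on $\Pi$ restricts to an equivalence relation on $\omega$, and, pushed forward along $p_\omega$, defines a subequivalence relation of $\Rel$ on the $\Rel$-class of $\omega$. Specialising to the $H$-coset relation: for the root $0 \in \omega$ and any other point $g \in \omega$, one has $g H = 0 \cdot H$ if and only if $g \in H$, and under $p_\omega$ this is transported to the rerooting $\omega \sim g^{-1}\omega$ for $g \in \omega \cap H$. This is exactly the defining relation of $\Rel\Rel$. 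So under the correspondence, the $H$-coset equivalence relation on $\Pi$ corresponds to $\Rel\Rel$ on $\MMo$.

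With this identification in hand, both directions are immediate transport along the correspondence. If $\sim_n(\Pi)$ is a leafwise one-ended clumping, then the corresponding subequivalence relations $\mathcal{S}_n \subseteq \Rel$ have $\mu_0$-almost surely finite classes, are increasing in $n$, and satisfy $\bigcup_n \mathcal{S}_n = \Rel\Rel$ by the above identification; this is precisely the definition of $\mu_0$-hyperfiniteness of $\Rel\Rel$. Conversely, given witnessing finite subequivalence relations $\mathcal{S}_n \nearrow \Rel\Rel$ of a $\mu_0$-hyperfinite structure, the correspondence produces factor equivalence relations $\sim_n(\Pi)$ with finite classes, increasing, and with union the $H$-coset relation on $\Pi$.

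There is no real obstacle here beyond bookkeeping; the only mild subtlety is to confirm that the correspondence preserves the structural features in play (finiteness of classes, monotonicity, and unions), which is routine because the correspondence is fibrewise a bijection on $\mu_0$-almost every rooted configuration (using essential freeness, which is built into the Palm framework being used). The leafwise hypothesis on $\Pi$ ensures that the relation ``$xH = yH$'' restricted to $\Pi_0$ has the expected shape (its class at the root is $\Pi_0 \cap H$), so that the passage between the two sides is genuinely meaningful.
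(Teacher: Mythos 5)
Your proposal is correct and is exactly the argument the paper intends: the lemma is stated as an immediate consequence of the correspondence principle (the paper gives no written proof), and your identification of the $H$-coset relation on $\Pi$ with $\Rel\Rel$ via $p_\omega(g)=g^{-1}\omega$, followed by transporting finiteness, monotonicity, and unions, is precisely the bookkeeping that correspondence supplies.
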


The following is the aforementioned generalisation of a result in \cite{Mellick} (take $H = G$).

\begin{prop}\label{leafwiseamen}
	Let $H < G$ be a closed and unimodular subgroup. Suppose $\Pi$ is a leafwise point process on $G$. Then $\Pi$ admits a leafwise one-ended clumping if and only if $H$ is amenable. 
\end{prop}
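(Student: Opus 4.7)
The plan is to reduce Proposition \ref{leafwiseamen} to the already-noted $H=G$ case from \cite{Mellick} by projecting the Palm space of $\Pi$ onto the Palm space of the leafwise base process on $H$. By Lemma \ref{leafwiseclumpiffhyperfinite}, admitting a leafwise one-ended clumping is equivalent to $(\MMo(G), \Rel\Rel, \mu_0)$ being $\mu_0$-hyperfinite, so the task is to show this cber is hyperfinite precisely when $H$ is amenable. The leafwise hypothesis provides an invariant point process $\Psi$ on $H$ with $\Pi_0 \cap H \stackrel{d}{=} \Psi_0$; denote by $\nu_0$ the law of $\Psi_0$ on $\MMo(H)$ and by $\Rel_H$ its rerooting equivalence relation.

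The key device is the projection $\pi \colon \MMo(G) \to \MMo(H)$, $\pi(\omega) := \omega \cap H$, which is well-defined since $0 \in H$. The leafwise condition is precisely the statement that $\pi_* \mu_0 = \nu_0$. If $\omega' = h^{-1}\omega$ for some $h \in \omega \cap H$, then $\pi(\omega') = h^{-1}\pi(\omega)$, so $\pi$ carries $\Rel\Rel$ into $\Rel_H$ and is a measure-preserving factor map of pmp cbers. Crucially, essential freeness of $\Pi$ implies that $\pi$ restricts to a bijection on each $\Rel\Rel$-class: if $h_1, h_2 \in \omega \cap H$ satisfy $h_1^{-1}\omega = h_2^{-1}\omega$, then $h_2 h_1^{-1} \in \stab_G(\omega) = \{e\}$ almost surely, forcing $h_1 = h_2$. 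Thus $\pi$ is class-bijective.

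Class-bijective factor maps between pmp cbers both preserve and reflect hyperfiniteness: a finite-class subrelation of $\Rel_H$ pulls back along $\pi$ to a finite-class subrelation of $\Rel\Rel$ of matching class cardinalities, and the push-forward of finite subrelations under $\pi$ works symmetrically, so exhaustions by finite subrelations correspond bijectively under $\pi$. We conclude that $\Rel\Rel$ is $\mu_0$-hyperfinite if and only if $\Rel_H$ is $\nu_0$-hyperfinite, and applying the $H=G$ case of the proposition (quoted above as the observation of \cite{Mellick}) to $(H, \Psi)$ finishes the equivalence. The main obstacle is the class-bijectivity step and the implicit essential-freeness hypothesis it requires on both $\Pi$ and the induced process $\Psi$; alternatively, the forward direction alone can be obtained without essential freeness by applying Connes--Feldman--Weiss directly to the pmp $H$-action on $(\MM(G), \mu)$ and descending to the Palm space $\MMo(G)$.
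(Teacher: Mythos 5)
Your reduction is a genuinely different route from the paper's (which constructs invariant means directly in both directions, transferring functions between $H$ and the equivalence classes via leafwise Voronoi cells and invoking the equivalence of $\mu$-amenability and $\mu$-hyperfiniteness), but as written it has two real gaps. First, the class-bijectivity step proves the wrong injectivity. Essential freeness of $\Pi$ shows that $h \mapsto h^{-1}\omega$ is injective on $\omega \cap H$, i.e.\ it parametrises the $\Rel\Rel$-class faithfully; but for $\pi$ to be injective on a class you need that $h_1^{-1}\omega \neq h_2^{-1}\omega$ implies $h_1^{-1}(\omega \cap H) \neq h_2^{-1}(\omega \cap H)$, which is essential freeness of the induced process $\Psi$ on $H$. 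That does not follow from essential freeness of $\Pi$, and more importantly the proposition carries no freeness hypothesis at all (nor does the paper's proof use one). Your Connes--Feldman--Weiss fallback only patches the forward implication.

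Second, and more seriously, the claim that ``the push-forward of finite subrelations under $\pi$ works symmetrically'' is false. Pulling back a finite-class subrelation of $\Rel_H$ along a class-injective map does yield a finite-class subrelation of $\Rel\Rel$, so that direction is fine. But the image of a finite subequivalence relation under a class-bijective map need not be transitive: two pairs in the image can concatenate through a common point of the base whose two preimages in the relevant fibre are not related upstairs, and the equivalence relation they generate can have infinite classes. So the implication you actually need for the converse of the proposition (hyperfiniteness of $\Rel\Rel$ implies hyperfiniteness of $\Rel_H$, hence amenability of $H$) is unproved. It can be repaired --- amenability does descend along class-bijective measure-preserving factor maps, via disintegration of $\mu_0$ over $\nu_0$ and averaging the means over fibres --- but at that point you are no longer pushing finite subrelations around; you are running the mean-transport argument that the paper carries out directly (there, by averaging a bounded function over the increasing unions $V_n(\Pi_0^H)$ of leafwise Voronoi cells of the clumping classes and taking an ultralimit to manufacture a left-invariant mean on $L^\infty(H)$). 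If you make the fibrewise averaging explicit and either drop the freeness assumptions or justify them, your reduction to the $H = G$ case of \cite{Mellick} would be a clean alternative proof.
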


\begin{defn}
	Let $\omega \in \MM(G)$ be a configuration. The \emph{Voronoi cell} of a point $g \in \omega$ is 
	\[
		V_\omega(g) = \{ x \in G \mid d(g, x) \leq d(\gamma, x) \text{ for all } \gamma \in \omega \}.
	\]
	The \emph{Voronoi tessellation} associated to $\omega$ is the ensemble of Voronoi cells $\{V_\omega(g)\}_{g \in \omega}$.
\end{defn}

Note that the Voronoi tessellation is equivariantly defined since the metric is left invariant.

The Vornoi tessellation is really a covering of $G$ by closed sets. In principle the boundaries of these sets can have positive volume, or at least this question is sensitive to the metric. If a true partition is required (which is often the case), one can instead use a tie-breaking mechanism to refine the cells at the expense of making them only measurable instead of closed. To do this, fix any Borel isomorphism $I : \MM(G) \to [0,1]$. Define the $I$-refined Voronoi cell of $\omega \in \MM(G)$ by
\[
	V^I_\omega(g) = \{ x \in G \mid d(g, x) \leq d(\gamma, x) \text{ for all } \gamma \in \omega \text{ and } I(g^{-1}\omega) < I(\gamma^{-1}\omega) \text{ if } d(\gamma,x) = d(g, x)\}.
\]
In other words, each point $x$ chooses the closest point of $\omega$ to it, and if this is not unique then it picks the point with smallest label. For our purposes there is no difference between ``true'' Voronoi cells and these tie-broken ones, so we use them and suppress $I$ from the notation.

An important fact for what follows is that almost surely, the Voronoi cell of every point of an invariant point process has finite volume. This follows from the Voronoi inversion formula, see for instance Section 9.4 of \cite{LastPenrose}. 

\begin{defn}
	Suppose $H < G$ is a closed subgroup and $\omega \in \MM(G)$ is a configuration. The \emph{leafwise Voronoi cell} of $g$ with respect to $\omega$ is
	\[
		LV_\omega(g) = \{ x \in gH \mid d(g, x) \leq d(\gamma, x) \text{ for all } \gamma \in \omega \cap gH\}.
	\]
\end{defn}

Again, one can further refine this with a tie-breaking mechanism, which we do but suppress in notation. Note that for a ``typical'' process the leafwise Voronoi cell is simply $gH$ itself, but for the Cox process driven by $G/H$ it is a nontrivial tessellation of $H$.

\begin{proof}[Proof of Proposition \ref{leafwiseamen}]

 Suppose $H$ is amenable. By Lemma \ref{leafwiseclumpiffhyperfinite}, it suffices to show that the restricted rerooting equivalence relation is $\mu_0$ amenable. Fix a left invariant mean $M : L^\infty(H) \to \RR$. Given a function $f \in \ell^\infty([\omega]_{\Rel\Rel})$, we define its extension $F : H \to \RR$ in the following way. Given a leafwise configuration $\omega$ and $h \in H$, there is a unique element $X(\omega, h)$ of $\omega$ such that the leafwise Voronoi cell of $X(\omega, h)$ contains $h$. Define
	\[
		F(h) = f(X(\omega, h)),
	\]
	and
	\[
		(m_\omega)(f) = M(F).
	\]
	It is clear that this defines an invariant mean.
	
	For the converse, assume that the restricted rerooting equivalence relation is $\mu_0$ hyperfinite and the point process is leafwise. Let us write $\Pi^H$ for the invariant point process on $H$ such that the Palm version $\Pi^H_0$ of $\Pi^H$ is $\Pi_0 \cap H$. We write $\nu$ and $\nu_0$ for the distribution of $\Pi_H$ and its Palm measure respectively.
 
	We construct an invariant mean $M : L^\infty(H) \to \RR$ as follows, using a standard technique from percolation theory. Let $V_n(\Pi_0)$ denote the union of the leafwise Voronoi cells of the points $\sim_n(\Pi^H_0)$ equivalent to the identity. For an essentially bounded function $F : H \to \RR$, let
	\[
		M_n(F) = \frac{1}{\lambda_H(V_n(\Pi^H_0))}\int_{V_n(\Pi^H_0)} F(h)d\lambda_H(h) \text{ and } M(F) = \lim_{n \to \mathcal{U}} M_n(F),
	\]
where $\mathcal{U}$ denotes a nonprincipal ultrafilter on $\NN$. This is invariant: for $n$ sufficiently large (depending randomly on $\Pi^H_0$), $M_n(h\cdot F) = M_n(F)$.
\end{proof}

\subsection{Cost}

\begin{defn}
	Suppose $\mathscr{G}(\Pi)$ is a factor graph. Its \emph{average degree} is
	\[
		\texttt{AvgDeg}(\mathscr{G}(\Pi)) = \left(\EE[\deg_0(\mathscr{G}(\Pi_0))\right],
	\]
	where $\Pi_0$ is the Palm version of $\Pi$.
\end{defn}

The average degree can be expressed without reference to the Palm measure, see Remark 3.14 of \cite{SamMiklos}

\begin{example}
Consider the $t$-cutoff $R$-star factor graph $\bigstar_{t,R}$. By the explicit description of the Palm version one readily sees that its average degree is $t\lambda(B(0,R))$. 
\end{example}

\begin{defn}
	Suppose $\Pi$ is an invariant point process on $G$. Its \emph{cost} is defined by the following formula
	\[
		\cost(\Pi) - 1 = \intensity(\Pi)\inf_{\mathscr{G}} \left(\frac{1}{2}\EE[\deg_0(\mathscr{G}(\Pi_0))] - 1 \right),
	\]
	where the infimum ranges over all connected factor graphs of $\Pi$.
\end{defn}
Thus to prove a group has fixed price one, one must prove that every essentially free point process $\Pi$ admits ``cheap'' connected factor graphs, that is for every $\e > 0$ a connected factor graph with average degree at most $2 + \e$. 

Cost is an isomorphism invariant. The following generalisation of that fact is crucial for us:

\begin{thm}
	Let $\Pi$ be an essentially free point process  weakly factoring onto $\Upsilon$. Then $\cost(\Pi) \leq \cost(\Upsilon)$.
\end{thm}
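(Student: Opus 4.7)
The plan is to upper bound $\cost(\Pi)$ by constructing cheap connected factor graphs of $\Pi$ from near-optimal factor graphs of $\Upsilon$, transported along the witnessing sequence of factors. Recall that by the cost formula, it is enough to control the edge density $\intensity(\Pi)\cdot\frac{1}{2}\EE[\deg_0(\mathscr{G}(\Pi_0))]$ of a connected factor graph of $\Pi$. Fix $\e > 0$ and choose a connected factor graph $\mathscr{G}$ of $\Upsilon$ whose edge density is within $\e$ of $(\cost(\Upsilon) - 1) + \intensity(\Upsilon)$. Let $\Phi_n(\Pi)\to\Upsilon$ weakly witness the weak factoring.

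First I would replace $\mathscr{G}$ by an $R$-local factor rule $\mathscr{G}_R$, meaning the edge decision for a pair $(x,y)$ depends only on the configuration intersected with $B(x,R)\cup B(y,R)$. Because the Palm-theoretic formula for the average degree is a functional of the fidi's on bounded sets, and stochastic continuity sets can be chosen freely, weak convergence $\Phi_n(\Pi)\to\Upsilon$ implies that as $n\to\infty$ the edge density of $\mathscr{G}_R(\Phi_n(\Pi))$ (which is a genuine factor graph of $\Pi$) converges to that of $\mathscr{G}_R(\Upsilon)$, which in turn is close to the edge density of $\mathscr{G}(\Upsilon)$ for $R$ large. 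This takes care of the \emph{local} part of the construction.

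The next step is to absorb the intensity gap. Since factoring can only thin points, $\intensity(\Phi_n(\Pi))\le\intensity(\Pi)$ with limit $\intensity(\Upsilon)$. I would attach each point of $\Pi$ not lying in $\Phi_n(\Pi)$ to the nearest element of $\Phi_n(\Pi)$ via a Voronoi-tree construction, adding edge density at most $\intensity(\Pi)-\intensity(\Phi_n(\Pi))\to\intensity(\Pi)-\intensity(\Upsilon)$. These extra edges, combined with the pulled-back $\mathscr{G}_R$, form a factor graph of $\Pi$ of edge density at most (edge density of $\mathscr{G}$) plus $(\intensity(\Pi)-\intensity(\Upsilon))+O(\e)$, which upon subtracting $\intensity(\Pi)$ and adding $1$ recovers $\cost(\Upsilon)+O(\e)$ as desired.

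The main obstacle is \emph{macroscopic connectivity}: $\mathscr{G}_R(\Phi_n(\Pi))$ together with the Voronoi forest might look locally like $\mathscr{G}(\Upsilon)$ yet fail to be globally connected, since weak convergence is insensitive to large-scale structure. To repair this I would invoke essential freeness of $\Pi$: by the analogue of Ab\'ert--Weiss (Theorem 5.5 of \cite{SamMiklos}) $\Pi$ weakly factors onto $[0,1]^\Pi$, and by transitivity of weak factoring (Theorem 2.24 of \cite{FMW}) we may work with factor of IID graphs of $\Pi$. Add a bridging graph of arbitrarily small edge density — for instance a $t$-cutoff $R'$-star factor graph with $t$ small and $R'$ large — which merges the at-most-countably many components without appreciably raising the edge density. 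A diagonal extraction in $(n,R,R',t)$ then produces a sequence of connected factor graphs of $\Pi$ whose edge densities converge to at most the edge density of $\mathscr{G}(\Upsilon)+(\intensity(\Pi)-\intensity(\Upsilon))+O(\e)$, and letting $\e\to 0$ gives $\cost(\Pi)\le\cost(\Upsilon)$. The delicate part is ensuring that the bridging step actually succeeds at producing a single connected component almost surely while contributing negligibly to cost; this is essentially the content of Theorem 2.25 of \cite{FMW}.
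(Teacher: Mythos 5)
The paper does not actually prove this theorem: it is imported from the literature (Theorem 2.22 of \cite{FMW}, with a weaker version in \cite{SamMiklos}), so your proposal must be judged against that argument. Your overall architecture --- localise a near-optimal connected graphing of $\Upsilon$, transport it through the approximating factors $\Phi_n$, repair connectivity at small extra cost --- is indeed the shape of the known proof, but three of your steps do not go through as stated.

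First, ``factoring can only thin points'' is false: a factor $\Phi_n(\Pi)$ need not be a subset of $\Pi$ (the IID Poisson factors onto Poisson processes of arbitrary, including larger, intensity), so the Voronoi-attachment step and the inequality $\intensity(\Phi_n(\Pi))\le\intensity(\Pi)$ have no basis; the correct move is to invoke the already-stated monotonicity of cost under honest factors, $\cost(\Pi)\le\cost(\Phi_n(\Pi))$, and then bound the latter. Second, convergence of the average degree of $\mathscr{G}_R(\Phi_n(\Pi))$ to that of $\mathscr{G}_R(\Upsilon)$ does not follow from convergence of fidi's alone: the degree is an unbounded functional, and one needs almost-everywhere continuity of the local rule with respect to the limit law together with uniform integrability or a truncation argument --- fixable, but missing. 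Third, and most seriously, the connectivity repair is the crux of the theorem and your mechanism for it fails: a $t$-cutoff $R'$-star graph with $t$ small almost surely connects two infinite components only if they admit an unbounded supply of point pairs at distance less than $R'$ --- exactly the ``high adjacency'' property that Section 4 of this paper must work to establish even in its special setting --- and with positive probability it misses any finite set of candidate bridging pairs. Theorem 2.25 of \cite{FMW}, which you invoke here, concerns the use of independent auxiliary randomness in factor graph constructions, not the reconnection of components after passing to a weak limit. The genuine proof exploits connectivity of $\mathscr{G}(\Upsilon)$ quantitatively (bounded-length paths witnessing connection are local, high-probability events and therefore survive weak convergence) together with compact generation of $G$, a hypothesis flagged in the introduction of this paper that your argument never uses and without which the bounded-range approximation cannot be made connected at all. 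As written, the proposal assumes the hard part rather than proving it.
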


The above theorem essentially appears in \cite{SamMiklos}, and appears explicitly as Theorem 2.22 of \cite{FMW}.

\begin{cor}\label{fiidcost}
	Suppose $\Pi$ is an essentially free point process on $G$. Then $\cost(\Pi) = \cost([0,1]^\Pi)$.
\end{cor}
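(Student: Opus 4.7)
The plan is to get both inequalities $\cost(\Pi) \leq \cost([0,1]^\Pi)$ and $\cost([0,1]^\Pi) \leq \cost(\Pi)$ from the immediately preceding monotonicity theorem applied in the two possible directions between $\Pi$ and its IID marking.

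For the inequality $\cost(\Pi) \leq \cost([0,1]^\Pi)$, I would invoke the example cited just above Theorem \ref{convergencecondition}, which states (via Theorem 5.5 of \cite{SamMiklos}) that every essentially free point process $\Pi$ weakly factors onto its IID marking $[0,1]^\Pi$. Then the monotonicity theorem gives the inequality directly. One should briefly note that $[0,1]^\Pi$ is itself essentially free: its stabiliser in $G$ is contained in $\stab_G(\Pi)$, which is trivial almost surely, so there is no issue applying any statement phrased for essentially free processes.

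For the reverse inequality $\cost([0,1]^\Pi) \leq \cost(\Pi)$, the projection that forgets the $\texttt{Unif}[0,1]$ marks is a measurable and equivariant map sending $[0,1]^\Pi$ to $\Pi$, so $\Pi$ is a (strict) factor of $[0,1]^\Pi$. Factoring is a special case of weak factoring, obtained by taking the constant sequence of factor maps, so the preceding theorem applies and yields the inequality.

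There is no real obstacle here; the only subtlety worth mentioning is just the orientation of the monotonicity theorem (if $A$ weakly factors onto $B$ then $\cost(A) \leq \cost(B)$, since $B$ has ``less structure'' with which to build cheap connected factor graphs). Combining the two inequalities produces the equality, and as a sanity check this corollary recovers, in the point process setting, the classical fact that cost is unchanged upon passing to a Bernoulli extension.
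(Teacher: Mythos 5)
Your proof is correct and is exactly the intended argument: the paper gives no explicit proof of this corollary, but it follows immediately from the preceding monotonicity theorem combined with the earlier-cited fact that $\Pi$ weakly factors onto $[0,1]^\Pi$ (Theorem 5.5 of \cite{SamMiklos}) and the trivial mark-forgetting factor in the other direction. Your remark that $[0,1]^\Pi$ inherits essential freeness is a worthwhile detail to include.
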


One should interpret this corollary as follows: in the definition of cost, one could allow the infimum to range over all factor of IID factor graphs, and the resulting cost notion is the same. 

\section{The IID Poisson weakly factors onto the Cox process}\label{weakfactorsection}

\begin{thm}\label{constructweakfactor}
	Suppose $A < G$ is a closed, amenable, unimodular subgroup. Then the IID Poisson point process on $G$ weakly factors onto the Cox process driven by $G/A$. 
\end{thm}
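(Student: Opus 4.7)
The plan is to produce, for each $n$, a factor $\Psi_n$ of $[0,1]^\Pi$ which has the distribution of the Cox process driven by an explicit random measure $\eta_n$, and to show $\eta_n$ weakly converges to $\eta_\Upsilon = \sum_{gA \in \Upsilon} \lambda_{gA}$, so that Theorem \ref{convergencecondition} upgrades this to the desired weak convergence of $\Psi_n$ to the Cox process driven by $G/A$. Since $A$ is amenable, fix a Følner sequence $K_n \subseteq A$ with $\lambda_A(K_n) \to \infty$, and set $t_n := 1/\lambda_A(K_n)$. Using a Borel isomorphism $[0,1] \cong [0,1]^2$, split the mark on each $x \in \Pi$ into a pair $(\xi_x^1, \xi_x^2)$. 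Let $\Pi_{t_n} := \{x \in \Pi : \xi_x^1 < t_n\}$, a thinned Poisson of intensity $t_n$. For each $x \in \Pi_{t_n}$, use $\xi_x^2$ as the seed of a fixed Borel-coded Poisson sampler to produce an independent Poisson process $P_x$ on the Følner piece $xK_n \subseteq xA$ with intensity $\lambda_A|_{xK_n}$; left-invariance of $\lambda_A$ makes the construction $G$-equivariant. Set $\Psi_n := \bigcup_{x \in \Pi_{t_n}} P_x$.

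Conditionally on $\Pi_{t_n}$ and the seeds, $\Psi_n$ is a superposition of independent Poissons, hence itself Poisson on $G$ with random intensity
\[
\eta_n \;:=\; \sum_{x \in \Pi_{t_n}} \lambda_{xA}\big|_{xK_n},
\]
so $\Psi_n$ is the Cox process driven by $\eta_n$. By Theorem \ref{convergencecondition}, it then suffices to show that $\eta_n$ weakly converges to $\eta_\Upsilon$ as random measures on $G$.

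I would verify this via Laplace functionals. For $f \in C_c(G)$ with $f \geq 0$, applying the Poisson Laplace formula to $\Pi_{t_n}$ yields
\[
-\log \EE[\exp(-\eta_n(f))] \;=\; t_n \int_G \bigl(1 - e^{-F_n(x)}\bigr)\, d\lambda(x),
\]
where $F_n(x) := \int_{K_n} f(xa)\, d\lambda_A(a)$. Decomposing $d\lambda = d\lambda_{gA}\, d\lambda_Q$ via Equation \eqref{haarrel} and substituting $c = ab$ in the inner integral gives $F_n(ga) = \int_{aK_n} f(gc)\,d\lambda_A(c)$, which converges pointwise in $a$ to $F(gA) := \int_A f(gc)\,d\lambda_A(c)$ for each coset $gA$ meeting $\operatorname{supp} f$. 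A standard Følner-averaging argument upgrades this to
\[
\frac{1}{\lambda_A(K_n)} \int_A \bigl(1 - e^{-F_n(ga)}\bigr)\, d\lambda_A(a) \;\longrightarrow\; 1 - e^{-F(gA)}.
\]
Because $t_n \lambda_A(K_n) = 1$, integrating over $G/A$ then recovers the target $-\log \EE[\exp(-\eta_\Upsilon(f))] = \int_{G/A}(1 - e^{-F(gA)})d\lambda_Q(gA)$.

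I expect the main obstacle to be upgrading the pointwise Følner convergence to the integrated Laplace limit, since one needs a dominant valid simultaneously in $a \in A$ and $gA \in G/A$. Compact support of $f$ restricts the outer integration to $\pi(\operatorname{supp} f)$, which has finite $\lambda_Q$-measure, and for each such coset the inner integrand is supported on a set of $\lambda_A$-measure comparable to $\lambda_A(K_n)$, controlled by applying Følner to the bounded slice $g^{-1}\operatorname{supp} f \cap A$. These bounds should produce dominated convergence, and once the Laplace functionals converge on all nonnegative $f \in C_c(G)$, standard results on weak convergence of random measures give $\eta_n \to \eta_\Upsilon$, concluding the proof via Theorem \ref{convergencecondition}.
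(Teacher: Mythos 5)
Your construction is the same as the paper's: thin the Poisson to intensity $1/\lambda_A(K_n)$ and propagate each surviving point along a F\o lner piece of its $A$-coset, so that the result is the Cox process driven by $\eta_n = \sum_{x\in\Pi_{t_n}}\lambda_{xA}|_{xK_n}$, and then reduce to weak convergence of the driving measures via Theorem \ref{convergencecondition}. The only divergence is in how that convergence is verified: the paper evaluates $\eta_n(B)$ on compact stochastic continuity sets, recognizes these as compound Poisson variables, and checks convergence of the Poisson parameter $p_n$ and of the summand distribution $X_n$ via characteristic functions; you instead compute the Laplace functional $\EE[\exp(-\eta_n(f))]$ for $f\in C_c(G)$ and pass to the limit by Weil's integration formula plus F\o lner averaging and dominated convergence. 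Both routes need the same two estimates (the normalized mass of $\{a : aK_n \cap g^{-1}\mathrm{supp}\,f \neq \varnothing\}$ is at most $1+\e_n$, and for most such $a$ the inner integral already equals the full coset integral), so the analytic content is identical; your Laplace-functional route is marginally cleaner in that it sidesteps the choice of a determining family of stochastic continuity sets, at the cost of invoking the standard fact that convergence of Laplace functionals on nonnegative $C_c$ functions characterizes weak convergence of random measures. One small wording fix: the superposition $\Psi_n$ is Poisson with intensity $\eta_n$ conditionally on $\Pi_{t_n}$ alone (conditioning also on the seeds makes the $P_x$ deterministic), but this does not affect the argument.
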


\begin{remark}
We will assume that the groups $A$ and $G$ are nondiscrete, as this is the more difficult case. The proof carries over to the discrete case by replacing references to the Poisson point process with Bernoulli percolation. Note that our main interest in the theorem is its application to cost, for which the nondiscrete case implies the discrete one: if $A$ is discrete, then $A \times S^1$ is nondiscrete in $G \times S^1$. The latter is then shown to have fixed price one. Note that $G < G \times S^1$ is a closed subgroup of finite covolume, so it inherits fixed price one.
\end{remark}

\begin{cor}\label{coxweakfactor}
	Every essentially free point process on $G$ weakly factors onto the Cox process driven by $G/A$. 
\end{cor}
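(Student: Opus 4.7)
The plan is to chain together a short sequence of weak factorings, with Theorem~\ref{constructweakfactor} providing the only non-formal link. Let $\Pi$ be an essentially free point process on $G$.

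First, by Theorem 5.5 of \cite{SamMiklos} (the point-process analogue of the Abért--Weiss theorem, already recalled in the Background), $\Pi$ weakly factors onto its IID marking $[0,1]^\Pi$. Next, the IID marking factors onto the Poisson point process on $G$ (of any intensity) by Proposition 5.1 of \cite{SamMiklos}, and an ordinary factoring is in particular a weak factoring. Since the Poisson point process on a noncompact nondiscrete unimodular group is essentially free, Theorem 5.5 of \cite{SamMiklos} applies again, giving that the Poisson weakly factors onto its own IID marking, namely the IID Poisson point process on $G$. Finally, Theorem~\ref{constructweakfactor} gives that the IID Poisson weakly factors onto the Cox process driven by $G/A$.

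Concatenating these four weak factorings and using the transitivity of weak factoring (Theorem 2.24 of \cite{FMW}), we obtain that $\Pi$ weakly factors onto the Cox process driven by $G/A$, as desired.

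There is no genuine obstacle here beyond Theorem~\ref{constructweakfactor} itself; the corollary is a formal consequence of that theorem together with the already-established fact that every essentially free point process sits above the IID Poisson in the weak factoring pre-order. If one wished to avoid invoking transitivity twice, one could instead bundle the middle steps by observing directly that $[0,1]^\Pi$ weakly factors onto $[0,1]^{\text{Poisson}}$ (for instance by splitting the $[0,1]$-marks into two independent $[0,1]$-marks and using one coordinate to realise the Poisson as a factor and the other to furnish its IID labels), but the chain above seems to be the most transparent presentation.
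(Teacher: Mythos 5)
Your proof is correct and is essentially the argument the paper intends: the corollary is stated without proof precisely because it follows by chaining $\Pi \rightsquigarrow [0,1]^\Pi \rightsquigarrow$ IID Poisson $\rightsquigarrow$ Cox process via Theorem 5.5 and Proposition 5.1 of \cite{SamMiklos}, Theorem \ref{constructweakfactor}, and transitivity of weak factoring. Your remark about bundling the middle steps by splitting the $[0,1]$-marks is also the standard way to avoid invoking essential freeness of the Poisson process itself; either route is fine.
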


Theorem \ref{constructweakfactor} can be seen as a natural generalisation of a result used in the proof that groups of the form $G \times \ZZ$ have fixed price one. In \cite{SamMiklos} it is shown that the IID Poisson on $G \times \ZZ$ weakly factors onto the Poisson on $G$ times $\ZZ$, by using the notion of ``propagation''. This consisted of taking a lower intensity Poisson point process on $G \times \ZZ$ and ``propagating'' it vertically. In the proof, it's quite explicit that amenability of $\ZZ$ plays a crucial role in the weak convergence. We will consider the natural analogue of this propagation with respect to an arbitrary amenable subgroup.

Note that the IID Poisson weakly factors onto the Poisson of any intensity.

\begin{defn}
Let $F \subseteq A$ be a finite volume subset. The \emph{Cox process driven by $F$} has driving measure
\[
	\eta_F = \sum_{x \in \Pi_F} \lambda_{xA}(\bullet \cap xF),
\]
where $\Pi_F$ denotes the Poisson point process on $G$ of intensity $\lambda_A(F)^{-1}$. 
\end{defn}

Note that the Cox process driven by $F$ always has intensity one.

By choosing an appropriate sequence of Folner sets $F_n \subseteq A$, we will show that the Cox process driven by $F_n$ weakly converges to the Cox process driven by $G/A$. 

\begin{proof}[Proof of Theorem \ref{constructweakfactor}]
	Choose a sequence $B_i \subseteq G$ of compact stochastic continuity sets which determine weak convergence. Choose  inductively symmetric Folner sets $F_n$ which are $((B_i^{-1}B_i \cap A),\e_n)$-invariant for each $i \leq n$, where $\e_n$ tends to zero. 
	
	Observe that the Cox process driven by $F_n$ is a factor of the IID marked Poisson on $G$. It therefore suffices to show that they weakly converge to the Cox process driven by $G/A$. We prove this using the condition of Theorem \ref{convergencecondition}.

Denote the driving measure by $\eta_n$, and by $\eta$ the driving measure of the Cox process with respect to $G/A$. Let $B$ be one of the compact stochastic continuity sets above. We must check that $\eta_n(B)$ weakly converges to $\eta(B)$. Observe that
\[
	\eta_n(B) = \sum_{x \in \Pi_{F_n}} \lambda_{xA}(B \cap xF_n) = \sum_{x \in BF_n \cap \Pi_{F_n}} \lambda_{xA}(B \cap xF_n),
\]
as terms from $x$ outside of $BF_n$ contribute nothing to the sum. This is a sum of Poisson-many IID random variables, which is known as a compound Poisson distribution. The same is true for our alleged limit $\eta(B)$, as
\[
	\eta(B) = \sum_{gA \in \Upsilon} \lambda_{gA}(B) = \sum_{gA \in \Upsilon \cap BA} \lambda_{gA}(B).
\]
The Poisson parameters of $\eta_n(B)$ and $\eta(B)$ respectively are 
\[
	p_n = \frac{\lambda_G(BF_n)}{\lambda_A{F_n}} \text{ and } p = \lambda_Q(BA),
\]
and the corresponding IID random variables are
\[
	X_n = \lambda_{x_n A}(B \cap x_n F_n) \text{ and } X = \lambda_{gA}(B)
\]
where $x_n \in G$ ranges $\lambda_G$ uniformly over $BF_n$ and $gA \in G/A$ ranges $\lambda_Q$ uniformly over $BA$.

Weak convergence can be understood in terms of the characteristic functions of these random variables. These have explicit forms (see Example 3.8.4 of Durrett, for instance). We are thus tasked to show that $p_n$ converges to $p$ and $X_n$ converges weakly to $X$. 

For the first, observe that by the equation relating Haar measures (Equation \ref{haarrel})
\[
	p_n = \frac{1}{\lambda_A(F_n)} \int_{BA} \lambda_{gA}(BF_n) d\lambda_Q(gA).
\]

The integrand vanishes unless $gA$ is in $BA$. In that case, write $g = ba$ and observe that
\[
	\lambda_{gA}(BF_n) = \lambda_A(a^{-1}b^{-1}BF_n) = \lambda_A(b^{-1}BF_n) \geq \lambda_A(F_n),
\] 
as $b^{-1}BF_n \supseteq F_n$. Thus we have the inequality $p_n \geq \lambda_Q(BA)$. 

Observe that $BF \cap A = (B \cap A)F$, as $F$ is a subset of the subgroup $A$. Hence also

\[
	p_n = \frac{1}{\lambda_A(F_n)} \int_{BA} \lambda_A(b^{-1}BF_n) d\lambda_Q(gA) \leq (1+\e_n) \int_{BA}d\lambda_Q(gA) = (1+\e_n)\lambda_Q(BA),
\]
showing that $p_n$ converges to $p$.

We now show that $X_n$ weakly converges to $X$. First, let $x_n$ denote a uniformly chosen element of $BF_n$, and $bA$ a uniformly chosen element of $BA$. Then $x_n A$ weakly converges to $bA$. To see this, write
\[
\PP[x_n A \in CA] = \frac{\lambda_G(BF_n \cap CA)}{\lambda_G(BF)} = \left(\frac{\lambda_G(BF_n \cap CA)}{\lambda_G(F_n)}\right) \bigg/ \left(\frac{\lambda_G(BF_n)}{\lambda_G(F_n)}\right) \to \frac{\lambda_Q(CA)}{\lambda_Q(BA)},
\]
which is exactly $ \PP[bA \in CA]$, as desired. The convergence of the denominator was the $p_n$ calculation, and the convergence of the numerator works the same. Note that the map $G/A \to \RR_{\geq 0}$ given by $gA \mapsto \lambda_{gA}(B)$ is continuous, hence $\lambda_{x_n A}(B)$ weakly converges to $\lambda_{bA}(B)$. We next claim that $\lambda_{x_n A}(B \cap x_n F_n)$ and $\lambda_{x_n A}(B)$ have the same weak limit. In fact, with probability tending to one we have
\[
	\lambda_{x_n A}(B \cap x_n F_n) = \lambda_{x_n A}(B).
\]
To see this, recall that $F_n$ was chosen to be $((B_i^{-1}B_i \cap A),\e_n)$-invariant. Thus for most points $f \in F_n$ (those in the interior), we have
\[
	F_n \supseteq (B_i^{-1}B_i \cap A)f = B_i^{-1}B_i f \cap Af = B_i^{-1}B_i f \cap A.
\]
Note that points of the form $B_i f$ with $f$ in the interior represent most points of $B_i F_n$. Thus most points $x$ of $B_i F_n$ satisfy
\[
	F_n \supseteq B_i^{-1} x \cap A.
\]
We interpret this statement as follows: if one can write $a = b^{-1} x$ with $b \in B_i$, then $a$ is in $F_n$. For such $x$ we have $B \cap xA = B \cap xF_n$. One inclusion is trivial, and the other follows from the above: an element of the lefthand side can be written $b = xa$, thus $a = b^{-1}x$ is in $F_n$.

Recall the coupling inequality of total variation: if $(X, Y)$ are coupled random variables with distributions $\mu$ and $\nu$, then
\[
	d_{TV}(\mu, \nu) \leq \PP[X \neq Y].
\]
Thus we have shown that
\[
	\lim_{n \to \infty} d_{TV}(\lambda_{x_n A}(B \cap x_n F_n), \lambda_{x_n A}(B)) = 0,
\]
which implies $\lambda_{x_n A}(B)$ and $\lambda_{x_n A}(B \cap x_n F_n)$ have the same weak limit, which we already identified as $\lambda_{bA}(B)$. This concludes the proof.
\end{proof}
\section{Cost one for the Cox process}\label{costonesection}

\subsection{Proof outline}

We first outline the strategy of proof for Theorem \ref{costtheorem}

As discussed above, it suffices to show that the Cox process driven by $G/A$ has fixed price one. Thus we must produce connected factor of IID factor graphs of this Cox process with average degree at most $2 + \e$ for every $\e > 0$. We construct large ``hyperfinite'' factor graphs which have average degree two, and very sparse families of large ``stars'' which have arbitrarily small average degree. The union of these two graphs is connected, which finishes the proof.

\begin{remark}
The star graphs in the proof also appear in \cite{FMW}. In essence, what is shown there is that if one has a Poisson point process equivariantly marked by a doubly recurrent invariant random tessellation, then the analogous hyperconnectivity property holds. 
\end{remark}

\begin{prop}[Hyperfiniteness]\label{hyperfiniteness}
    Let $A < G$ be a noncompact, amenable, unimodular subgroup. Then the Cox process $\Pi$ on $G$ with respect to $G/A$ admits a factor graph $\mathscr{H}(\Pi)$ whose connected components span $gA \cap \Pi$ for $gA \in \Upsilon$ and are isomorphic to $\ZZ$. 
\end{prop}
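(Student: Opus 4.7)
The plan is to construct $\mathscr{H}(\Pi)$ via the correspondence between factor graphs of $\Pi$ and subgraphs of the Palm equivalence relation, specifically its restriction $\Rel\Rel$: a $\ZZ$-graphing of $\Rel\Rel$ will translate directly into a factor graph of $\Pi$ whose connected components are exactly the leaves $\Pi \cap gA$ and are bi-infinite paths.

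First, I would apply Proposition \ref{leafwiseamen} to the Cox process driven by $G/A$, which is leafwise by the example in the background. Since $A$ is amenable, $\Pi$ admits a leafwise one-ended clumping, which by Lemma \ref{leafwiseclumpiffhyperfinite} is equivalent to $(\MMo, \Rel\Rel, \mu_0)$ being $\mu_0$-hyperfinite.

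Next, I would verify that $\Rel\Rel$ has all classes infinite $\mu_0$-almost surely. A Palm version of the Cox process is $\Pi \cup \Pi_0'$, where $\Pi_0'$ is an independent sample of a Palm version of the Poisson on $A$, so its intersection with $A$ is a Poisson point process on $A$ together with an added atom at the origin. Since $A$ is noncompact, $\lambda_A(A) = \infty$, and so this leafwise process has infinitely many points almost surely. Hence each $\Rel\Rel$-class $\{h^{-1}\omega \mid h \in \omega \cap A\}$ is infinite $\mu_0$-a.s., that is, $\Rel\Rel$ is aperiodic.

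With aperiodicity and $\mu_0$-hyperfiniteness in hand, the standard consequence of the Connes--Feldman--Weiss theorem produces a measurable automorphism $T : \MMo \to \MMo$ whose orbits coincide $\mu_0$-almost surely with the classes of $\Rel\Rel$. The measurable subgraph of $\Rel\Rel$ with undirected edges $\{\omega, T\omega\}$ has connected components equal to the $T$-orbits, each a bi-infinite path isomorphic to $\ZZ$. Under the correspondence principle described in the background, this subgraph pulls back to a factor graph $\mathscr{H}(\Pi)$ of $\Pi$ whose connected components span the leaves $\Pi \cap gA$ and are isomorphic to $\ZZ$ as desired.

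The main subtlety is invoking Connes--Feldman--Weiss in a form that gives a genuine factor graph of the point process (rather than only a measurable graphing defined up to null sets); this is handled by working entirely at the level of the pmp cber $(\MMo, \Rel\Rel, \mu_0)$ and then using the correspondence of the background section to transport the $\ZZ$-graphing to an equivariantly defined factor graph of $\Pi$. A minor point is that one might prefer a more hands-on construction directly from the leafwise clumping $\sim_n$: in each class $\sim_n(\omega)$ one could inductively thread a Hamiltonian path using the IID marks from $[0,1]^\Pi$ and take a limit, but invoking the equivalence-relation machinery is cleaner.
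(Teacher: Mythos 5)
Your proposal is correct and follows essentially the same route as the paper: establish that the restricted rerooting relation $\Rel\Rel$ is $\mu_0$-amenable (via Proposition \ref{leafwiseamen}) and aperiodic (from noncompactness of $A$), generate it by a single pmp $\ZZ$-action, and transport the resulting line graphing back to a factor graph via the correspondence principle. The only cosmetic difference is the attribution of the generation-by-$\ZZ$ step (you cite Connes--Feldman--Weiss/Dye where the paper says Ornstein--Weiss), and your explicit verification of aperiodicity via the Palm version $\Pi \cup \Pi_0'$ is a welcome elaboration of a step the paper leaves terse.
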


\begin{defn}
    Let $H < G$ be a closed subgroup. We say that two cosets $g_1 H, g_2 H \in G/H$ are \emph{highly $R$-adjacent} if there is infinite and unbounded set of pairs of points $x \in g_1 H$ and $y \in g_2 H$ with $d(x, y) < R$.
\end{defn}

    We stress that the set should be unbounded, not simply noncompact. 

\begin{prop}[Hyperconnectivity]\label{hyperconnectivity}
	Denote by $\Upsilon$ the Poisson point process on $G/A$. Then almost surely for every distinct pair of cosets $g_1 A$ and $g_2 A$, there exists $R > 0$ such $g_1 A$ and $g_2 A$ are highly $R$-adjacent.
\end{prop}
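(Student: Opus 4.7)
The plan is to reduce the almost sure statement to a deterministic $\lambda_Q \otimes \lambda_Q$-a.e.\ statement on $(G/A)^2$ via the Slivnyak--Mecke formula for Poisson processes, and then to extract high adjacency from the conservativity hypothesis applied to a bounded exhaustion of $G/A$. Note first that whether a pair $(p, q) \in (G/A)^2$ is highly $R$-adjacent depends on $(p, q)$ alone, not on $\Upsilon$. Hence by the two-point Slivnyak--Mecke formula, the expected number of distinct pairs $(p, q) \in \Upsilon^2$ that are not highly $R$-adjacent for any $R > 0$ equals the $\lambda_Q \otimes \lambda_Q$-measure of the ``bad'' set
\[
	\mathrm{Bad} = \{(p, q) \in (G/A)^2 : p \neq q \text{ and } (p, q) \text{ is not highly } R\text{-adjacent for any } R > 0\}.
\]
It therefore suffices to show that $\mathrm{Bad}$ is $\lambda_Q \otimes \lambda_Q$-null.

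For each $n \geq 1$ let $V_n \subseteq G/A$ be the image of the closed ball $B_n$ under the quotient map; $V_n$ is compact, and has positive finite $\lambda_Q$-measure using that $A$ is unimodular. Applying conservativity of $G \acts (G/A)^2$ to the positive measure set $V_n \times V_n$ yields: for $\lambda_Q \otimes \lambda_Q$-a.e.\ $(p, q) \in V_n \times V_n$ there is an unbounded sequence $g_k \in G$ with $g_k p, g_k q \in V_n$. Fix representatives $g_p, g_q$ of $p, q$, and for each $k$ write $g_k g_p = b_k a_k$ and $g_k g_q = b_k' a_k'$ with $b_k, b_k' \in B_n$ and $a_k, a_k' \in A$. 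Set $x_k := g_k^{-1} b_k = g_p a_k^{-1} \in p$ and $y_k := g_k^{-1} b_k' \in q$. By left-invariance $d(x_k, y_k) = d(b_k, b_k') < 2n$, while $x_k$ leaves every compact set of $G$: since $g_k$ is unbounded so is $g_k^{-1}$ (inversion being a homeomorphism of $G$), and $b_k$ remains in the compact ball $B_n$. Hence $(p, q)$ is highly $2n$-adjacent, and $\mathrm{Bad} \cap (V_n \times V_n)$ is $\lambda_Q \otimes \lambda_Q$-null.

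To conclude, observe that $\mathrm{Bad}$ is $G$-invariant under the diagonal action, as high adjacency is preserved by left translation (using left-invariance of $d$). Combined with $\sigma$-compactness of $G$ and measure-preservation of the $G$-action on $(G/A)^2$, covering $G \cdot (V_n \times V_n)$ by countably many translates of $V_n \times V_n$ then promotes the above to $\mathrm{Bad} \cap G \cdot (V_n \times V_n)$ being null. Finally, any pair $(p, q)$ satisfies $d_{G/A}(p, q) < n$ for some $n$, and for that $n$ any $g \in G$ with $gp = eA$ places $(gp, gq) \in V_n \times V_n$; hence $\bigcup_n G \cdot (V_n \times V_n) = (G/A)^2$ and $\mathrm{Bad}$ is null. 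I expect the main subtlety to be the step where high adjacency is extracted from the abstract unbounded-return property, as one must convert the dynamical recurrence into a geometric statement about pairs of representatives; this rests on compactness of the ball $B_n$ together with the basic fact that an unbounded sequence in $G$ has an unbounded sequence of inverses.
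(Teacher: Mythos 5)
Your proof is correct and follows essentially the same route as the paper's: the paper's sets $\mathcal{A}_r$ of pairs of cosets both meeting $B(0,r)$ are exactly your $V_r \times V_r$, and the step ``recurrence to $\mathcal{A}_r$ implies high $2r$-adjacency'' is precisely the representative-choosing computation you carry out explicitly. Two minor stylistic remarks: your invocation of the two-point Slivnyak--Mecke formula is slightly more precise than the paper's appeal to Corollary~\ref{Meckecor} (which is stated only for one point), and your final $G$-invariance/$\sigma$-compactness detour is unnecessary since $\bigcup_n V_n\times V_n = (G/A)^2$ already gives $\mathrm{Bad}$ null as a countable union of null sets.
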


Note that the $R$ will depend in general on the particular pair of cosets.

These two propositions quickly yield:

\begin{proof}[Proof of Theorem \ref{costtheorem}]

Let $\e > 0$. We construct a connected factor of IID factor graph of average degree less than $2 + \e$.

Choose $t_n$ going to zero sufficiently fast so that 
\[
\sum_n t_n \lambda(B(0,n)) < \e.
\]
Then
\[
\bigstar([0,1]^\Pi) = \bigcup_n \bigstar_{n, t_n}([0,1]^\Pi)
\]
has average degree less than $\e$. Then $\bigstar([0,1]^\Pi) \cup \mathscr{H}(\Pi)$, where $\mathscr{H}(\Pi)$ denotes the hyperfinite graphing from Proposition \ref{hyperfiniteness} is connected almost surely. To see this, note that every individual coset is connected. It now suffices to find even a single edge between an arbitrary pair of cosets $g_1 A$ and $g_2 A$ (we will find infinitely many). Now given two cosets $g_1 A$ and $g_2 A$ are in $\Upsilon$, they are highly $R$ adjacent for some $R > 0$. Moreover, the points of $\Pi$ appear in these cosets as independent Poisson point processes (with intensity measure $\lambda_{g_1 A}$ and $\lambda_{g_2 A}$ respectively). We therefore find infinitely many pairs of points $x \in \Pi \cap g_1 A$ and $y \in \Pi \cap g_2 A$ with $d(x, y) < R$, and hence by Borel-Cantelli infinitely many edges of star graphs connecting such pairs, concluding the proof.
\end{proof}

\subsection{Proof of hyperfiniteness}

\begin{proof}[Proof of Proposition \ref{hyperfiniteness}]
We have seen that the Palm equivalence relation of $\Pi$ with the restricted rerooting equivalece relation $\Rel\Rel$ is $\mu_0$ amenable. It is also aperiodic, as $A$ is noncompact. By Ornstein-Weiss there exists a pmp action $\ZZ \acts (\MMo(G), \mu_0)$ generating $\Rel\Rel$. The induced graphing does the job.
\end{proof}
\subsection{Proof of hyperconnectivity}

To prove Proposition \ref{hyperconnectivity}, we first prove an auxiliary lemma.

\begin{lem}\label{asRconnected}
    Let $H < G$ be a closed and unimodular subgroup such that the action $G \acts G/H \times G/H$ is conservative. Denote by $\Upsilon$ the Poisson point process on $G/H$, which we also view as a random closed subset of $G$. Then almost surely, for every $g_1 H, g_2 H \in \Upsilon$, there exists $R > 0$ such that $g_1 H$ and $g_2 H$ are highly $R$-adjacent.
\end{lem}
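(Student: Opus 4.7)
The plan is to reduce the pointwise claim to an almost-everywhere statement on $G/H \times G/H$ via Mecke's formula, and then establish that statement using conservativity applied to a well-chosen positive-measure set.

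Let $E \subseteq G/H \times G/H$ be the Borel set of pairs $(g_1 H, g_2 H)$ that are highly $R$-adjacent for some $R > 0$. The two-point Mecke formula (that is, the Campbell formula applied to the second factorial moment measure of the Poisson $\Upsilon$) gives
\[
\EE\left[\#\{(x_1, x_2) \in \Upsilon^{(2)} : (x_1, x_2) \notin E\}\right] = (\lambda_Q \otimes \lambda_Q)(E^c),
\]
so it suffices to show the right-hand side vanishes: then almost surely every pair of distinct points of $\Upsilon$ lies in $E$, which is precisely the conclusion of the lemma.

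For each $R > 0$, set
\[
B_R := \{(g_1 H, g_2 H) \in G/H \times G/H : d(e, g_1 H) < R \text{ and } d(e, g_2 H) < R\},
\]
a set of finite, positive $\lambda_Q \otimes \lambda_Q$ measure (properness of $d$ makes the image of $B(e,R)$ in $G/H$ have finite $\lambda_Q$ measure). Conservativity of $G \acts G/H \times G/H$ applied to $B_R$ yields, for $\lambda_Q \otimes \lambda_Q$-a.e.\ pair $(g_1 H, g_2 H) \in B_R$, an unbounded set $T \subseteq G$ of $\gamma$ with $\gamma \cdot (g_1 H, g_2 H) \in B_R$. For each such $\gamma \in T$ there exist $h_1, h_2 \in H$ with $d(e, \gamma g_i h_i) < R$; setting $x_\gamma := g_1 h_1 \in g_1 H$ and $y_\gamma := g_2 h_2 \in g_2 H$, left invariance of $d$ gives
\[
d(x_\gamma, y_\gamma) = d(\gamma g_1 h_1, \gamma g_2 h_2) < 2R,
\]
while the triangle inequality and left invariance give
\[
d(e, x_\gamma) \geq d(e, \gamma^{-1}) - d(\gamma^{-1}, g_1 h_1) = d(e, \gamma) - d(e, \gamma g_1 h_1) > d(e, \gamma) - R.
\]
As $\gamma$ ranges over the unbounded return set $T$, the points $x_\gamma$ escape every ball, producing an unbounded and hence infinite family of pairs $(x_\gamma, y_\gamma)$ at mutual distance less than $2R$. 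Thus $\lambda_Q \otimes \lambda_Q$-a.e.\ pair in $B_R$ is highly $2R$-adjacent.

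Since $\bigcup_{R \in \NN} B_R$ exhausts $G/H \times G/H$ (every coset is at finite distance from $e$), taking a countable union of the null exceptional sets gives $(\lambda_Q \otimes \lambda_Q)(E^c) = 0$, completing the argument via Mecke. I expect no serious obstacle; the only real conceptual step is the choice of $B_R$, whose definition breaks the obvious diagonal $G$-invariance of the ``highly adjacent'' property and thereby lets conservativity produce genuine recurrence, which then translates---via the escape of $\gamma^{-1}$ to infinity---into unbounded near-coincidences between the two cosets.
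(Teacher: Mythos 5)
Your proof is correct and follows essentially the same route as the paper's: your $B_R$ is exactly the paper's set $\mathcal{A}_r$ of pairs of cosets meeting $B(0,r)$, conservativity is applied to it in the same way to produce unbounded return times and hence unbounded near-coincidences at distance $<2R$, and the reduction to a full-measure statement on $G/H \times G/H$ via the (two-point) Mecke/Campbell formula is the paper's appeal to Corollary \ref{Meckecor}. If anything, you are slightly more careful than the paper in checking that the pairs $(x_\gamma, y_\gamma)$ actually escape to infinity as $\gamma$ does.
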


\begin{proof}[Proof of Proposition \ref{hyperconnectivity}]
We show that the set of pairs of cosets $(g_1 H, g_2 H)$ for which there exists an $R$ such that they are highly $R$-adjacent has full measure as a subset of $G/H \times G/H$. We are then finished by Corollary \ref{Meckecor}.

Consider the set of cosets which are ``$2r$-adjacent at the origin''. Formally, let
\[
\mathcal{A}_r = \{(g_1 H, g_2 H) \in G/H \times G/H \mid B(0, r) \cap g_1 H \neq \empt \text{ and } B(0, r) \cap g_2 H \neq \empt \}.
\]
Note that if $(g_1 H, g_2 H)$ is in $\mathcal{A}_r$ then there exist $x \in g_1 H$ and $y \in g_2 H$ with $d(x, y) < 2r$.

Observe that if $\gamma \in G$ is such that $\gamma g_1 H \in \mathcal{A}_r$ and $\gamma g_2 H \in \mathcal{A}_r$, then there exists $x \in g_1 H$ and $y \in g_2 H$ such that
\[
d(x, y) = d(\gamma x, \gamma y) < r.
\]
That is, for a given $g_1 H$ and $g_2 H$, if
\[
\{ \gamma \in G \mid (\gamma g_1 H, \gamma g_2 H) \in \mathcal{A}_r \text{ is unbounded}\}
\]
then $g_1 H$ and $g_2 H$ are highly $r$-adjacent. Moreover, $\bigcup_r \mathcal{A}_r = G/H \times G/H$. 

Let $\mathcal{A}'_r$ denote the recurrent points of $\mathcal{A}_r$. Then by conservativity, almost every point of $\mathcal{A}_r$ is in $\mathcal{A}'_r$. We conclude that $\bigcup_n \mathcal{A}_n$ has full measure, as desired. 
\end{proof}

\begin{remark}
In the special case where the subgroup $A$ is normal, one can do a simpler construction in lieu of the hyperconnectivity argument. Namely, one simply lifts a graphing from $G/A$ to a very sparse subset of the Cox process.
\end{remark}


\section{Applications}\label{applicationssection}

\begin{defn}
    We say that a noncompact closed subgroup $H < G$ is \emph{weakly normal} if $H \cap H^g$ is noncompact for almost every $g \in G$.
\end{defn}

Observe that if $A < G$ is an amenable, closed, unimodular, weakly normal subgroup then $G \acts (G/A)^2$ is a conservative action and therefore has fixed price one by Theorem \ref{costtheorem}. This is how we will apply the theorem in practice.

The proof makes use of structure theory for real semisimple Lie groups and $p$-adic split reductive groups in a similar fashion as Lemma 4.1 of \cite{FMW}. Note that the structure theory is quite similar in both cases. We recall this terminology (see Section 4.1 of \cite{FMW} for further explanation).

Let $G_1$ be a semi-simple real Lie group and $G_2$ a connected $p$-adic split reductive group of rank at least one. Take maximal compact subgroups $K_i$, a maximal split tori $A_i$ and minimal parabolics $P_i$ containing $A_i$ in $G_i$. Let $P_1 = M_1 A_1 N_1$ be the Langlands decomposition of $P_1$ (see (2.3.6) of \cite{GangolliVaradarajan}), where $N_1$ is the unipotent radical and $M_1$ is the maximal compact subgroup of the centralizer of $A_1$ in $G_1$. Let $P_2 = M_2 N_2$ be the Levi decomposition (see Section 0.1 of \cite{Silberger}). With $\mathfrak{a}$ the Lie algebra of $A_1$, we have the following formula for the modular character $\chi_{P_1}$:
\[
    \chi_{P_1}(e^H mn) = e^{2\rho_1(H)} \text{ for any } H \in \mathfrak{a}, m \in M, \text{ and } n \in N.
\]
For the $p$-adic case we also have (see Lemma 1.2.1.1 of \cite{Silberger}) the following formula for the modular character $\chi_{P_2}$:
\[
\chi_{P_2}(mn) = \abs{2\rho_2(m)}_p,
\]
where we have simplified the notation. Here $2\rho(m)$ is the sum of the roots, see Section 1.2.1 of \cite{Silberger} for further information.

\subsection{Products with real Lie groups}

\begin{proof}[Proof of Theorem \ref{higherrankfp}]
 Observe that in higher rank $G_1$ contains an amenable, closed, unimodular, weakly normal subgroup $A$ by Lemma 4.1 of \cite{FMW}. Then $A \times 1 < G_1 \times G_2$ also satisfies these conditions, giving fixed price one.

For the second case, by a similar argument it suffices to find an amenable, closed, unimodular, weakly normal subgroup of $G_1 \times G_2$. We execute a similar argument to Lemma 4.1 of \cite{FMW}. 

Let $U$ denote the kernel of the modular function of $P_1 \times P_2$. Then $U$ is amenable, closed, and unimodular. We show that $U \cap U^{(g_1,g_2)}$ is noncompact for almost every $(g_1,g_2) \in G_1 \times G_2$.

We make use of the Bruhat decompositions of $G_i$ (see (2.5.5) of \cite{GangolliVaradarajan} for the real case and Section 19 of \cite{Milne} for the $p$-adic). We may write
\[
G_i = \bigsqcup_{w_i \in W_i} P_i w_i P_i,
\]
where the decomposition is over the respective Weyl groups. In both decompositions there is a ``big Bruhat cell'' $P_i w^i_0 P_i$ corresponding to the longest element of the Weyl group, which is characterised by the fact that $\rho_i^{w^i_0} = -\rho_i$. Now fix $(g_1, g_2) \in G_1 \times G_2$. Ignoring a null set, we may write $g_i = p_i^1 w_0^i p_i^2$ with $p_i^1$ and $p_i^2$ in $P_i$. Then
\[
U \cap U^{(g_1, g_2)} = U \cap U^{(p_1^1 w_0^1 p_1^2,p_2^1 w_0^2 p_2^2)} = (U \cap U^{(w_0^1, w_0^2)})^{(p_1^1,p_2^1)},
\]
where we make use of normality. We show that $U \cap U^{(w_0^1, w_0^2)}$ contains the following closed subgroup, and then show it is noncompact.
\[
D := \ker(\chi_{P_1}\chi_{P_2}) =  \{ (g_1, g_2) \in P_1 \times P_2 \mid \chi_{P_1}(g_1) \chi_{P_2}(g_2) = 1\}.
\]
Clearly $D \subseteq U$. We show $D$ is contained in $U^{(w_0^1, w_0^2)}$. Take $(g_1, g_2) \in D$ and write $g_1 = m_1 a n_1$ and $g_2 = m_2 n_2$. Then by definition
\[
\chi_{P_1}(g_1^{w_0^1}) = \frac{1}{\chi_{P_1}(g_1)} \text{ and } \chi_{P_2}(g_2^{w_0^2}) = \frac{1}{\chi_{P_2}(g_2)},
\]
and thus $(g_1, g_2)$ is in $U^{(w_0^1, w_0^2)}$, as desired.

Finally, for noncompactness of $D$ note that $\chi_{P_1}\chi_{P_2}$, modulo the Levi subgroups of each factor, descends to a linear form on $\RR^{\text{rank}(G_1)} \times \QQ_p^{\text{rank}(G_2)}$, and thus has noncompact kernel.
\end{proof}

\subsection{$\SL(2,\QQ)$}
\begin{proof}[Proof of Theorem \ref{sl2qfp}]

Gaboriau showed\cite{GaboriauNotes} that if a countable group $\Gamma$ is the increasing union of fixed price one groups, then $\Gamma$ has fixed price one. Observe that $\SL(2,\QQ) = \bigcup_{n}\SL(2, \ZZ[\frac{1}{p_1}, \frac{1}{p_2}, \ldots, \frac{1}{p_n}])$,
and each $\SL(2, \ZZ[\frac{1}{p_1}, \frac{1}{p_2}, \ldots, \frac{1}{p_n}]) < \SL(2, \RR) \times \SL(2, \QQ_{p_1}) \times \cdots \times \SL(2, \QQ_{p_n})$ is a lattice. These groups have fixed price one by Theorem \ref{higherrankfp}, proving the result.
\end{proof}

\printbibliography

\end{document}